\newcommand{\Z}{\mathbb Z}
\newtheorem{thm}{Theorem}[section]
\newtheorem{lem}[thm]{Lemma}
\theoremstyle{remark}
\newtheorem{rem}{\bf Remark}[section]
\theoremstyle{definition}
\newtheorem{defn}[thm]{Definition}
\newtheorem{Problem}[thm]{\bf Problem}
\numberwithin{equation}{section}
\begin{document}
\title[Localization for coupled harmonic oscillators]{Localization for  random coupled harmonic oscillators on $\Z^d$}

\author[H. Cong]{Hongzi Cong}
\address[H. Cong]{School of Mathematical Sciences, Dalian University of Technology, Dalian 116024, China}
\email{conghongzi@dlut.edu.cn}


\author[Y. Shi]{Yunfeng Shi}
\address[Y. Shi] {School of Mathematics,
Sichuan University,
Chengdu 610064,
China}
\email{yunfengshi@scu.edu.cn}

\author[Z. Zhang]{Zhihan Zhang}
\address[Z. Zhang]{School of Mathematical Sciences, Dalian University of Technology, Dalian 116024, China}
\email{dlutzzh@163.com}
\date{\today}

\keywords{Nekhoroshev estimate, Birkhoff normal form technique, Nonlinear localization, Inner parameter}


\begin{abstract}
In this paper we consider the localization properties of coupled harmonic oscillators in  random media. Each of these oscillators is restricted  to the lattice $\Z^d$.  We show that for most  states and an arbitrary choice  of the random media,  the long time localization for the coupled system holds  in a time scale larger than the polynomial one.  
\end{abstract}

\maketitle

\section{Introduction and main results}
Localization  for quantum particles in  random media is of fundamental importance in both physics and mathematics since the remarkable work of Anderson \cite{And58}.  While considerable progress has been made in dealing with the linear disordered models, much less  is known for the nonlinear one. In fact,  the nonlinear random models   appear   in a wide class of physical systems,   such as the classical optics \cite{SBFS07} and Bose-Einstein condensate \cite{Sha07} in the presence of disorder. 
In this paper we consider the localization properties of coupled harmonic oscillators in a random media. More precisely, we study the Hamiltonian of the form
\begin{equation}\label{122301}
H(q,\bar q)=H_2(q,\bar q)+H_4(q,\bar q)+R(q,\bar q),
\end{equation}
where
\begin{align*}
H_2(q,\bar q)=\sum_{\bm j\in\mathbb{Z}^d}v_{\bm j}\left|q_{\bm j}\right|^2,\ H_4(q,\bar q)=\frac12\sum_{\bm j\in\mathbb{Z}^d}\left|q_{\bm j}\right|^4
\end{align*}
and
\begin{align*}
R(q,\bar q )=O(q^6).
\end{align*}
Here we assume $v=\{v_{\bm j } \}_{\bm j\in\Z^d}$ is a family of independent identically distributed (i.i.d) random variables in $[0,1]$ and $R$ only contains the terms of {\it short-range} action, which will be defined as follows. 
For each $\bm \alpha\in\mathbb{N}^{\mathbb{Z}^d}$,  define 
\begin{align*}
{\rm supp}\  \bm \alpha
&=\left\{\bm j\in\mathbb{Z}^d:\ \alpha_{\bm j}\neq 0\right\}, \ \Delta( \bm \alpha)
=
\sup_{\bm j,\bm j'\in{\rm supp}\ \bm \alpha}
 \left|\bm j-\bm {j}'\right|_1,\ 
| \bm \alpha|=\sum_{\bm j\in \mathbb{Z}^d}\left|\alpha_{\bm j}\right|,
\end{align*}
where
$$|\bm j|_1=\sum_{1\leq i\leq d}\left|j_i\right|\ {\rm for}\ \bm j=
\left( j_{1}, j_{2},\cdots, j_{d}\right)\in \mathbb{Z}^d.$$
We assume that $R$ has the form of 
\begin{equation}\label{010901} 
	R(q,\bar q) =
         \sum_{\bm\beta,\bm\gamma\in\mathbb{N}^{\mathbb{Z}^d},|\bm \beta+\bm \gamma|=6\atop
         \Delta( \bm \beta+\bm\gamma)\leq 1}
         R^{\bm \beta \bm \gamma}
         q^{\bm \beta}\bar q^{\bm \gamma}
         \end{equation}
     with $ \left| R^{\bm \beta \bm \gamma} \right| \le 1$.
   Our main goal is to prove that {\it for all} $v
  	$ and ``most" of solutions $q(t)$ with small initial states (of size $\epsilon$) of \eqref{122301}  
  remain localized in a time scale of 
  \begin{equation*}
  	|t|\leq \exp \left\{\frac{c|\ln \epsilon|^2}{\ln\ln \epsilon^{-1}}\right\},\quad c>0
  \end{equation*}
  in the phase space
  \begin{align*}
  	\ell_{\infty}^{\sigma}=\left\{q=(q_{\bm j})_{\bm j\in\mathbb{Z}^d}:\left\|q\right\|_\sigma:=\sup_{\bm j\in\mathbb{Z}^d}\left|q_{\bm j}\right|\left(1+\left|\bm j\right|_1\right)^{\sigma}<\infty\right\}
  \end{align*}
  with $\sigma\geq d+1$ (cf. Theorem \ref{main} in the following for details). 
 To clarify our main motivation, we first recall some results about the celebrated  discrete  nonlinear Schr\"odinger equations (DNLS) with random potentials. So  consider 
 \begin{align}\label{DNLS}
\sqrt{-1} \frac{dq_{\bm j}}{dt}=\epsilon(\bm\Delta q)_{\bm j}+v_{\bm j} q_{\bm j}+\delta |q_{\bm j}|^2q_{\bm j},
 \end{align}
 where the discrete Laplacian $\bm\Delta$ is defined as $(\bm \Delta q)_{\bm j}=\sum\limits_{|\bm e|_1=1}q_{\bm j+\bm e}.$
Note that the Hamiltonian associated with \eqref{DNLS} is given by 
\begin{align}\label{HNLS}
H_{NLS}=H_2+\frac\delta2 H_4+ \epsilon \sum_{\bm i\in\Z^d}\sum_{\bm |\bm i-\bm j|_1=1}q_{\bm i}\bar q_{\bm j},
\end{align}
where $H_2$ and $H_4$ are defined in \eqref{122301}. The first rigorous localization result   concerning  \eqref{HNLS} was obtained by Fr\"ohlich-Spencer-Wayne \cite{FSW86}  with the general $H_4$ (which means that there exists nonresonant terms in degree 4), where the full dimensional KAM tori were constructed in the case $\epsilon=0$ for most $v$. Later, Bourgain-Wang \cite{BW08} proved the existence of finitely dimensional KAM tori for \eqref{DNLS}
in the case of $0<\epsilon+\delta\ll1$ for most $v$. These KAM results allow an infinite time scale but require more restrictions on the initial states.  Another perspective of the research  is to work in finite time scales. Along this line, Benettin-Fr\"ohlich-Giorgilli \cite{BFG88} first proved a Nekhoroshev type theorem concerning the perturbations of $H_2$ with small coupled Hamiltonians of degree  at leas $3$ and of short-range interactions for most $v$ with Gauss distributions. In the important work \cite{WZ09}, the authors proved the first long-time nonlinear Anderson localization with roughly initial state (i.e. $q(0) \in\ell^2(\Z)$) in a polynomial long time scale in the presence of $\bm\Delta$. Recently, Cong-Shi-Zhang \cite{CSZ21} extended the time scale of \cite{WZ09} to the one obtained in \cite{BFG88}. We also refer to \cite{FKS09,FKS12} for more recent progress on the nonlinear Anderson localization.  We should remark that all the works mentioned above require essential restrictions on the random potentials $v$ when establishing the localization. In fact, in \cite{Bou04}, Bourgain raised the following problem
\begin{Problem}[cf. page 1348 in \cite{Bou04})]\label{prob}
{\it What may be said about the stability of $\{I_j=|q_j|^2\}_{j\in\Z}$ for the \eqref{DNLS} (with $d=1$) without any assumption on the $\{v_j \}_{j\in\Z}$ ?}
\end{Problem}
The present work tries to answer this problem in the case that  the discrete Laplacian part is replaced by  short-range interacting Hamiltonians of degree at least $5$. While we can not address the original problem of Bourgain,  our result holds for all lattice dimensions.  In this context, we also extend the work of \cite{BFG88}.  To answer the above Bourgain's problem, we need {\it sufficiently many}  parameters to overcome the resonances difficulty. 
Without any restriction on $v$ (i.e, the outer parameters),  one  has to choose the  initial sates to be the parameters (i.e., the inner parameters).  Since the initial states come from $\ell_\infty^{\sigma}$,
 the inner parameters must decay with $\bm j\in\Z^d,$  which is significantly different from the outer parameters case.  So  it is highly nontrivial to handle the resonances in this inner  parameters case.  In this paper we resolve this issue by introducing new non-resonant conditions. 


Now we define  the inner parameters. Let 
\begin{equation}\label{122302}
    \Omega=\left\{\zeta \in\mathbb{R}^{\mathbb{Z}^d}:\ 
    \zeta_{\bm j}
    \left(1 + \left|\bm j\right|_1\right)^{2\sigma}\in\left[0,1\right]\right\}.
    \end{equation}
For each $r>0$, denote 
\begin{equation*}
B_{\sigma}(r)=\left\{q\in\ell_{\infty}^\sigma: 
\left\|q\right\|_{\sigma}
\leq r\right\}.
\end{equation*}
Denote by ${\rm mes}(\cdot)$ the standard normalized product measure on $\Omega
$. Our main result is 
\begin{thm}\label{main}
	Let $\sigma\geq d+1$. Consider the Hamiltonian \eqref{122301} where $R$ is given by \eqref{010901}. For any $\eta>0$, there are $\epsilon_0=\epsilon_0(\eta,d,\sigma)$ and  $\Omega'\subset\Omega$ with ${\rm mes}(\Omega\setminus\Omega')\leq \eta$ so that for $\zeta\in\Omega'$ and $\epsilon<\epsilon_0$  the following holds true: 
There exists a symplectic map $\Phi:B_{\sigma}(\epsilon)\rightarrow B_{\sigma}(2\epsilon)$, which is close to the identity map, such that
\begin{align*}
H\circ \Phi\left(\widetilde q,\overline{\widetilde q}\right)=H_2+\mathcal{J}+Z+O\left(\exp\left\{\frac{\left|\ln \epsilon\right|^2}{ 10^4 \sigma \ln\ln\epsilon^{-1}}\right\}\right),
\end{align*}
where 
 \begin{equation*}
 \mathcal{J}=\frac12\sum_{\bm j\in\mathbb{Z}^d}\left(\left|\widetilde q_{\bm j}\right|^2-\zeta_{\bm j}\right)^2,
 \end{equation*}
and $Z=O(\|\widetilde q\|_\sigma^6)$ depends only  on  the action variable $\widetilde{I}=\left(\widetilde I_{\bm j}=|\widetilde q_{\bm j}|^2\right)_{\bm j\in\mathbb{Z}^d}$.  
Moreover,  for each initial state $q(0)$
satisfying $\left\|q(0)\right\|_\sigma\leq \epsilon$ and \begin{equation*}
	|\widetilde{q}_{\bm j}(0)|^2=\epsilon^2\zeta_{\bm j},
\end{equation*}
one has 
\begin{align*}
	\left|
	 \left| q_{\bm j}(t)  \right|^2 - \left| q_{\bm j}(0)  \right|^2
	\right|<\epsilon^{2}  (1+|\bm j|_1)^{-3\sigma}\ {\rm for}\ \forall\  \bm j \in \mathbb{Z}^d
\end{align*}
assuming 
\begin{equation*}
\left|t\right|\leq \exp\left\{\frac{\left|\ln \epsilon\right|^2}{10^4 \sigma \ln\ln\epsilon^{-1}}\right\}.
\end{equation*}
\end{thm}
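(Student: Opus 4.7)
\smallskip

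\noindent\textbf{Proof proposal.}
The plan is to construct, for most $\zeta\in\Omega$, a symplectic transformation $\Phi$ putting $H$ in a Birkhoff normal form (BNF) $H\circ\Phi=H_2+\mathcal{J}+Z+\mathcal{R}$ with an integrable part and an exponentially small remainder $\mathcal{R}$, then read off the action stability from the fact that the flow of the integrable piece preserves each $|\widetilde q_{\bm j}|^2$. Since no assumption is placed on the random frequencies $v_{\bm j}$, the non-resonance budget must instead come from the initial actions $\zeta$, which play the role of \emph{inner parameters}. Here the specific offset structure $\mathcal{J}=\tfrac12\sum(|\widetilde q_{\bm j}|^2-\zeta_{\bm j})^2$ is crucial: on expanding, the linear-in-action contribution $-\sum\zeta_{\bm j}I_{\bm j}$ shifts the effective quadratic part to $\sum(v_{\bm j}-\zeta_{\bm j})I_{\bm j}$, renormalizing the frequencies to $v_{\bm j}-\zeta_{\bm j}+O(\epsilon^2)$ at the initial torus $I_{\bm j}=\epsilon^2\zeta_{\bm j}$, and this is the mechanism through which $\zeta$ enters the small divisors.

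The construction is iterative. At step $s$, a symplectic map generated by some $\chi_s$ eliminates all non-integrable monomials of degree $\le N_s$ by solving a homological equation whose coefficients are those of the monomials divided by $\bm k\cdot\omega(\zeta)$ with $\bm k=\bm\beta-\bm\gamma$. A central invariant of the iteration is the \emph{short-range} structure inherited from \eqref{010901}: if all non-integrable monomials in the current Hamiltonian have support diameter $\le C_1 s$, the next step preserves this structure with a slightly larger constant, so after $s_*\sim|\ln\epsilon|/\ln\ln\epsilon^{-1}$ steps the diameters remain bounded by $C_2 s_*$. Weighted $\ell_\infty^\sigma$-estimates plus careful bookkeeping on these short-range supports in the spirit of \cite{BFG88,CSZ21} yield geometric decay of successive remainders, which compounds into the stated exponentially small final error.

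The main obstacle is the measure estimate needed to impose non-resonance $|\bm k\cdot\omega(\zeta)|\ge\delta(\bm k)$ for all admissible $\bm k$. Given $\bm k$ with $\mathrm{supp}\,\bm k$ near a site $\bm x\in\Z^d$, choose $\bm j_0\in\mathrm{supp}\,\bm k$ with $|\bm j_0|_1$ smallest; varying $\zeta_{\bm j_0}$ over its interval $[0,(1+|\bm j_0|_1)^{-2\sigma}]$ shifts $\bm k\cdot\omega(\zeta)$ by a quantity of size $|k_{\bm j_0}|(1+|\bm j_0|_1)^{-2\sigma}$, so the normalized measure of the bad set in $\zeta_{\bm j_0}$ is at most $C\delta(\bm k)(1+|\bm j_0|_1)^{2\sigma}/|k_{\bm j_0}|$. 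Unlike the standard outer-parameter setting, the blow-up factor $(1+|\bm j_0|_1)^{2\sigma}$ at distant supports forces $\delta(\bm k)$ to decay polynomially in the \emph{location} of $\mathrm{supp}\,\bm k$, not merely in $|\bm k|_1$. Summing over all $\bm k$ compatible with the BNF bookkeeping, the total excluded measure stays below $\eta$ provided $\sigma\ge d+1$, because the number of admissible short-range supports near $\bm x$ of diameter $\le L$ is polynomial in $L$ with a $d$-dependent exponent. Devising new non-resonant conditions that remain summable against this spatially decaying parameter budget is the heart of the argument.

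Once the BNF is in place, the integrable part $H_2+\mathcal{J}+Z$ preserves each $|\widetilde q_{\bm j}|^2$, so $\dot I_{\bm j}=\{I_{\bm j},\mathcal R\}$ comes solely from the exponentially small short-range $\mathcal R$. Integrating over $|t|\le\exp\{c|\ln\epsilon|^2/\ln\ln\epsilon^{-1}\}$ and converting the $\ell_\infty^\sigma$-bound on $\mathcal R$ into site-by-site estimates on $\{I_{\bm j},\mathcal R\}$ via the $(1+|\bm j|_1)^{-\sigma}$ weights yields $||q_{\bm j}(t)|^2-|q_{\bm j}(0)|^2|<\epsilon^2(1+|\bm j|_1)^{-3\sigma}$; the extra factor $(1+|\bm j|_1)^{-\sigma}$ reflects the differentiation of a weighted object, and the near-identity character of $\Phi$ allows one to pass between $q$ and $\widetilde q$ without degrading the estimate.
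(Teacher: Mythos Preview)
Your outline matches the paper's architecture (rescale, impose non-resonance on the inner parameters $\zeta$, run a finite Birkhoff normal form, bootstrap the action stability), and your measure-estimate paragraph is essentially right: the paper's non-resonance condition carries a spatial weight $(1+\bm k^-)^{-3\sigma}$ precisely to absorb the $(1+|\bm j_0|_1)^{2\sigma}$ Jacobian you identified, and summability then uses $\sigma\ge d+1$. Two points, one minor and one substantive.

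\emph{Minor.} The frequency shift is not $v_{\bm j}-\zeta_{\bm j}$. The paper first rescales $q\mapsto\epsilon q$ (so $\widehat H=\epsilon^{-4}H(\epsilon q,\epsilon\bar q)$) and only then rewrites $H_4$ via $J_{\bm j}:=\epsilon^{-1}(|q_{\bm j}|^2-\zeta_{\bm j})$; this produces the \emph{exact} identity $\widehat H=\sum_{\bm j}\omega_{\bm j}|q_{\bm j}|^2+\tfrac{\epsilon^2}{2}\sum_{\bm j}J_{\bm j}^2+\epsilon^2R$ with $\omega_{\bm j}=\epsilon^{-2}v_{\bm j}+\zeta_{\bm j}$ (plus sign, order-one $\zeta$). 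So $\zeta$ enters the small divisors linearly and exactly, not through an $O(\epsilon^2)$ torus expansion.

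\emph{Substantive gap.} You do not say how the normal-form iteration can swallow the spatial \emph{growth} that the non-resonance condition injects into the coefficients. Each homological equation divides by something $\gtrsim (1+\bm k^-)^{-3\sigma}$, so the solved $F_s$ picks up a factor $(1+\bm n^+)^{3\sigma}$; after many steps these pile up. Plain $\ell_\infty^\sigma$ bounds on $q$ give only $(1+|\bm j|_1)^{-\sigma}$ per factor and do not obviously close the loop. The paper's device is to run the whole iteration in the mixed variables $(J,q,\bar q)$ and to maintain, alongside $\|q\|_\sigma\lesssim 1$, the sharper pointwise bound $|J_{\bm j}|\lesssim (1+|\bm j|_1)^{-3\sigma}$ (propagated step by step via estimates on $I_{\bm j}\circ\Phi_{F_s}^1-I_{\bm j}$). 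The coefficient bounds in the Iterative Lemma then track the number $|\bm\alpha|$ of $J$-factors and take the form $(1+\bm n^+)^{\sigma(|\bm n|-6+|\bm\alpha|)}$; the extra $3\sigma$ decay per $J$-factor is exactly what balances the $(1+\bm n^+)^{3\sigma}$ contributed by each small divisor, so the spatial weight stays controlled under Poisson brackets. This $J$-bookkeeping, together with the support/diameter bookkeeping $\Delta(\bm n)\le(|\bm n|-2)/4$, is what makes the scheme close; your proposal alludes to ``careful bookkeeping'' but does not identify this compensation mechanism, and without it the iteration as you describe it would not go through.
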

\begin{rem}
The long time scale of our result is also a Nekhoroshev type one as in \cite{BFG88}. Our result also allows arbitrary random potentials $v=\{v_{\bm j}\}$. 
\end{rem}
If an additional restriction is imposed on the initial states, we can obtain the exponential long time localization result  as that of \cite{CMS22}.  More precisely, we have

\begin{thm}\label{th2}
	Let $\sigma\geq d+1$. 
	Consider the Hamiltonian \eqref{122301} where $R$ is given by \eqref{010901}.
    For each 
    \begin{equation}\label{epd}
    	0<\epsilon<2^{-12d-9},
    \end{equation}
     any $v$ and $q(0)$ satisfying 
\begin{equation}\label{cd2}
	\left\|q(0)\right\|_{\sigma}\leq\epsilon \quad \text{and} \quad q_{\bm 0}(0)=0, 
\end{equation}
one has
\begin{align}\label{1.2}
\left|
	 \left| q_{\bm j}(t)   \right|^2- \left| q_{\bm j}(0)   \right|^2
	\right|
	<\epsilon^{2}  (1+\langle\bm j\rangle)^{-3\sigma}, \quad  
     \forall\  \bm j \in \mathbb{Z}^d
\end{align}
for any
\begin{equation*}
|t|\leq \epsilon^{-3} 2^{\sigma},
\end{equation*}
where $\langle \bm j\rangle=\max\{\left|\bm j\right|_1,1\}.$
In particular, if $\sigma\geq \epsilon^{-1}$, the stability time is exponential long, namely,  \eqref{1.2} holds for
\begin{equation}\label{th2.2}
	|t| < \epsilon^{-3} 2^{ \epsilon^{-1}}.
\end{equation}
\end{thm}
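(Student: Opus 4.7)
The plan is a direct bootstrap on the weighted norm $\|q(t)\|_\sigma$, exploiting the fact that both $H_2$ and $H_4$ commute with every action $I_{\bm j}:=|q_{\bm j}|^2$, so the drift of $|q_{\bm j}|$ is driven solely by $R$. Setting $p_{\bm j}(t):=q_{\bm j}(t)\exp\left(\sqrt{-1}\int_0^t(v_{\bm j}+|q_{\bm j}(s)|^2)\,ds\right)$ conjugates away the integrable rotation, yielding $\dot p_{\bm j}=-\sqrt{-1}\,e^{\sqrt{-1}\int\cdots}\,\partial_{\bar q_{\bm j}}R$; since $|q_{\bm j}(t)|=|p_{\bm j}(t)|$, one gets the clean bound
\begin{equation*}
\bigl||q_{\bm j}(t)|-|q_{\bm j}(0)|\bigr|\ \le\ \int_0^{|t|}|\partial_{\bar q_{\bm j}}R(s)|\,ds.
\end{equation*}
Assume the bootstrap hypothesis $\|q(t)\|_\sigma\le 2\epsilon$ on a maximal interval $|t|\le T$. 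Each monomial in $\partial_{\bar q_{\bm j}}R$ has degree $5$, is supported in a ball of radius $1$ about $\bm j$, and carries a factor $|q_{\bm j}|^m|q_{\bm k}|^{5-m}$ for some neighbor $\bm k$ and $m\in\{0,\dots,5\}$. Hence $|\partial_{\bar q_{\bm j}}R|\lesssim_d\tilde\delta_{\bm j}^{5}$ with $\tilde\delta_{\bm j}:=\max_{|\bm k-\bm j|_1\le 1}|q_{\bm k}(t)|$.

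For $\bm j=\bm 0$, every neighbor has $|\bm k|_1=1$, so under the bootstrap $\tilde\delta_{\bm 0}\le 2\epsilon\cdot 2^{-\sigma}$. Using $q_{\bm 0}(0)=0$, the displayed bound gives $|q_{\bm 0}(t)|\le C_d\epsilon^5 2^{-5\sigma}|t|$, so on $|t|\le\epsilon^{-3}2^{\sigma}$ we have $I_{\bm 0}(t)\le C_d^2\epsilon^{4}2^{-8\sigma}\ll\epsilon^{2}\cdot 2^{-3\sigma}$, and the bootstrap assumption $I_{\bm 0}\le\tilde\delta_{\bm 0}^{2}$ is self-consistent. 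For $\bm j\ne\bm 0$ with $|\bm j|_1=n\ge 1$, a short combinatorial check (non-origin neighbors lie at $|\bm k|_1=n-1$ or $n+1$; the origin neighbor, when present, is negligible by the previous step) gives $\tilde\delta_{\bm j}\le 2\epsilon\,\max(n,2)^{-\sigma}$. Integrating and using $|q_{\bm j}(0)|+|q_{\bm j}(t)|\le 3\epsilon(1+n)^{-\sigma}$,
\begin{equation*}
\bigl||q_{\bm j}(t)|^2-|q_{\bm j}(0)|^2\bigr|\ \lesssim_d\ \epsilon^{3}\cdot 2^{\sigma}\,(1+n)^{-\sigma}\,\max(n,2)^{-5\sigma}.
\end{equation*}
The desired bound $\epsilon^{2}(1+n)^{-3\sigma}$ reduces to $C_d\epsilon\cdot 2^{\sigma}\le \max(n,2)^{5\sigma}/(1+n)^{2\sigma}$; the right-hand side equals $8^{\sigma}$ at $n=1$, $(32/9)^{\sigma}$ at $n=2$, $(243/16)^{\sigma}$ at $n=3$, and grows with $n$. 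The worst case is $n=2$, where $(32/9)^{\sigma}/2^{\sigma}=(16/9)^{\sigma}\ge(16/9)^{d+1}$ for $\sigma\ge d+1$, and the hypothesis $\epsilon<2^{-12d-9}$ easily absorbs the $d$-dependent constant $C_d$. Combining the origin and off-origin bounds, $\|q(t)\|_\sigma\le\sqrt{5/4}\,\epsilon<2\epsilon$, closing the bootstrap and establishing the theorem.

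The principal obstacle is the site $\bm j=\bm 0$: without the hypothesis $q_{\bm 0}(0)=0$, the pure-origin ($m=5$) contribution to $\partial_{\bar q_{\bm 0}}R$ would be of order $\epsilon^5$ rather than $\epsilon^5\cdot 2^{-5\sigma}$, producing an excursion of $|q_{\bm 0}|$ of size $\epsilon^{2}\cdot 2^{\sigma}$ over the allowed time---vastly exceeding the target $\epsilon\cdot 2^{-3\sigma/2}$. The vanishing initial condition forces $|q_{\bm 0}|$ to grow from zero only via its nearest-neighbor coupling, inheriting their $2^{-\sigma}$ decay, and this is precisely what is required to control the $n=1$ estimate, where the origin neighbor would otherwise spoil the spatial decay used in bounding $\tilde\delta_{\bm j}$.
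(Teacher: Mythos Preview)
Your approach is essentially the paper's: a direct bootstrap (no Birkhoff normal form) exploiting that $H_2+H_4$ Poisson-commute with every $I_{\bm j}$, so only the degree-six short-range $R$ drives the drift. The paper packages the argument more cleanly by working from the outset in the modified norm $\interleave q\interleave_\sigma:=\sup_{\bm j}|q_{\bm j}|(1+\langle\bm j\rangle)^\sigma$ with $\langle\bm j\rangle=\max\{|\bm j|_1,1\}$; the hypothesis $q_{\bm 0}(0)=0$ is used exactly once, to conclude $\interleave q(0)\interleave_\sigma\le\epsilon$ from $\|q(0)\|_\sigma\le\epsilon$, after which a single bootstrap $\interleave q(t)\interleave_\sigma\le 2\epsilon$ treats the origin and all other sites uniformly and yields $|\{I_{\bm j},R\}|\le\epsilon^5(1+\langle\bm j\rangle)^{-3\sigma}2^{-\sigma}$ in one stroke. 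This modified norm is precisely the clean formulation of your ``self-consistency'' step: as written, your $\tilde\delta_{\bm 0}=\max_{|\bm k|_1\le 1}|q_{\bm k}|$ includes $|q_{\bm 0}(t)|$ itself, so the claimed bound $\tilde\delta_{\bm 0}\le 2\epsilon\cdot 2^{-\sigma}$ does \emph{not} follow from the stated hypothesis $\|q\|_\sigma\le 2\epsilon$ alone---you must add $|q_{\bm 0}(t)|\le 2\epsilon\cdot 2^{-\sigma}$ to the bootstrap, which is exactly what the weight $(1+\langle\bm 0\rangle)^\sigma=2^\sigma$ accomplishes. Also, the conjugation $p_{\bm j}=q_{\bm j}e^{i\int(\cdots)}$ is unnecessary: the paper simply bounds $\bigl|\tfrac{d}{dt}I_{\bm j}\bigr|=|\{I_{\bm j},R\}|$ directly, avoiding the two-step estimate via $|q_{\bm j}(t)|+|q_{\bm j}(0)|$.
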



\subsection*{The strategy of the proof }

    The proof of Theorem \ref{main} is based on the Birkhoff normal form technique   of Bourgain-Wang \cite{BW07} (cf.  Wang-Zhang \cite{WZ09}). 
         So we first rewrite the Hamiltonian of \eqref{122301} as 
    \begin{equation}\label{ss}
    \widehat{H}(q,\bar q):=\epsilon^{-4}H(\epsilon q,\epsilon\bar q).
    \end{equation}
   Then for $\zeta\in\Omega$,
   we introduce the notation 
    \begin{equation}\label{eJ}
    	\epsilon J_{\bm j} = \left|q_{\bm j}\right|^2 - \zeta_{\bm j},
    \end{equation}
    and  assume 
        \begin{equation}\label{J}
    	|J_{\bm j}| < (1+|\bm j|_1)^{-3\sigma}.
    \end{equation}
    As  a result,  the Hamiltonian $\widehat{H}(q,\bar q)$  becomes
    \begin{equation}\label{122401}
    \widehat{H}(q,\bar q)=\sum_{\bm j\in\mathbb{Z}^d}\omega_{\bm j}\left|q_{\bm j}\right|^2+\frac{\epsilon^{2}}2\sum_{\bm j\in\mathbb{Z}^d}J_{\bm j}^2+\epsilon^2R(q,\bar q),
    \end{equation}
    where
    \begin{equation}\label{122303}
    \omega_{\bm j}=\epsilon^{-2}v_{\bm j}+\zeta_{\bm j}.
    \end{equation}
    \begin{rem}
    	 The estimate (\ref{J}) is of vital importance, and it can be kept during the iterations.
    \end{rem}

Next, we introduce our new non-resonant conditions which are essential for our iterations. 
\begin{defn}
Given any  $\eta>0$ and any large $M>0$, we say the frequency $\bm \omega$ is $(\eta,M)$-non-resonant if for any $0\neq  \bm k\in\mathbb{Z}^{\mathbb{Z}^d}$ satisfying $\left|\bm k\right|$ and $\Delta(\bm k)\leq M$, one has
\begin{align}\label{122106}
\left|\sum_{\bm j\in\mathbb{Z}^d}k_{\bm j}\omega_{\bm j}\right|
>
\frac{\eta}{ (1+\bm k^-)^{3\sigma}
	\left(2+10\Delta(\bm k)\right)^{2d\left|\bm k\right|}
},
\end{align}
where
$$\bm k^-=\min_{\bm j \in \operatorname{supp} \bm k}
|\bm j|_1.$$
\end{defn}
\begin{rem}
	As compared to \cite{BFG88}, we  replace $d+1$  (cf. (3.14) in \cite{BFG88}) with $3\sigma$ in  our  non-resonant  conditions to guarantee  both the measure estimate and the iterations using Birkhoff normal form technique. This type of non-resonant conditions  plays an essential role in dealing with inner parameters.  Definitely, our non-resonant conditions make sense because we have a better estimate on  $J$ (cf. \eqref{J}).
	We also remark that the factor $(2+10\Delta(\bm k))^{2d|\bm k|}$ here can be regarded as a replacement of   certain geometric descriptions in \cite{BFG88}.
\end{rem}

Under the above preparations, 
by the standard Birkhoff normal form technique together with  the $(\eta,M)$-non-resonant conditions,
    one can obtain a normal form of higher order, i.e. there exists a symplectic map $\Phi$, which is close to the identity map, such that
    \begin{equation*}
    	\widehat{H}\circ \Phi(q,\bar q)
    	=\sum_{\bm j\in\mathbb{Z}^d}\omega_{\bm j}\left|q_{\bm j}\right|^2
    	+\frac{\epsilon^{2}}2\sum_{\bm j\in\mathbb{Z}^d}J_{\bm j}^2
    	+O(|I|) + O\left(\epsilon^{0.24M}\right),
    \end{equation*}
    where $O(|I|)$ are  the terms depending only on the  action variables $I=\left(I_{\bm j}\right)_{\bm j\in\mathbb{Z}^d}$ with $I_{\bm j}=\left|q_{\bm j}\right|^2$.  
    	Then by optimizing $M \sim \frac{\ln \epsilon^{-1}}{\ln \ln \epsilon^{-1}}$ (cf. (\ref{M}) for details) and through a standard bootstrip lemma we finish the proof of Theorem \ref{main}.
    
    
    	The proof of Theorem \ref{th2} follows from  a \textbf{tame} type inequality even without the Birkhoff normal form technique. Preciously, if $q\in\ell_\infty^\sigma$ and $q_{\bm 0}=0$ then  $q\in \widetilde{\ell}_\infty^\sigma$ (cf. (\ref{well})). Further in the  phase space
    	$\widetilde{\ell}_{\infty}^{\sigma}$, the following estimate holds (cf. (\ref{IH2}) for details)
    	\begin{equation*}
    		\left|  \{ I_{\bm j}, H \} \right|
    		\le \epsilon^{5} (1+\langle \bm j \rangle )^{-3\sigma}
    		2^{-\sigma}, 
    	\end{equation*}
        which finishes the proof of Theorem \ref{th2}.
        The key point here is that $\sigma$ can be chosen free from $\epsilon.$

\subsection*{Organization of the paper}
The structure of the Hamiltonian is studied in \S \ref{struh}. The Birkhoff normal form results are stated and proved in \S \ref{BKN}. The measure estimate concerning the non-resonant frequencies is established in \S \ref{Em}. The proofs of Theorem \ref{main} and \ref{th2} are given in \S \ref{mainthm}.

\section{Structure of the  Hamiltonian}\label{struh}

For any $
	\bm \alpha,
 \bm \beta,
 \bm \gamma\in \mathbb{N}^{\mathbb{Z}^d}
$ and $\zeta\in\Omega$, introduce the monomials in the form
\begin{align*}
J^{\bm \alpha}q^{\bm \beta}\bar q^{\bm \gamma}
=\prod_{\bm j\in\mathbb{Z}^d}
J_{\bm j}^{ \alpha_{\bm j}}
q_{\bm j}^{\beta_{\bm j}}
\bar q_{\bm j}^{\gamma_{\bm j}},
\end{align*}
where $J_{\bm j}$ is defined by (\ref{eJ}).
Let $\bm n=(\bm \alpha, \bm \beta,\bm \gamma)$
      and we define
\begin{align*}
{\rm supp}\ \bm n={\rm supp}\ (\bm \alpha+ \bm \beta+ \bm \gamma),\quad
\Delta(\bm n)
=\Delta(\bm \alpha+ \bm \beta+ \bm \gamma),
\quad |\bm n|=|2 \bm \alpha+  \bm \beta+ \bm \gamma|.
\end{align*}
Then the perturbation $\epsilon^2R(q,\bar q)$ in (\ref{122401}) is turned into
\begin{equation*}
\epsilon^2R(q,\bar q)=\sum_{|\bm \beta+\bm \gamma|=6, \Delta(\bm n)=1\atop |\bm \alpha|=0
}R(\bm n)J^{\bm \alpha}q^{\bm \beta}\bar q^{\bm \gamma}
\end{equation*}
\begin{defn} \label{defpb}
	Define $\bm m=(\widetilde{\bm \alpha}, \widetilde{\bm \beta} ,\widetilde{\bm \gamma}), 
	\bm \mu=(\widehat{\bm \alpha}, \widehat{\bm \beta} ,\widehat{\bm \gamma})
	\in\mathbb{N}^{\mathbb{Z}^d}
	\times\mathbb{N}^{\mathbb{Z}^d}
	\times\mathbb{N}^{\mathbb{Z}^d}$.
Then for the given two Hamiltonians
\begin{align*}
H(q,\bar{q})
&=
\sum_{\bm \mu
    }H(\bm \mu)
J^{\widehat{\bm \alpha}}q^{\widehat{\bm \beta}}\bar {q}^{\widehat{\bm \gamma}},\\
G(q,\bar{q})&=\sum_{\bm m
    }
G(\bm m)
J^{\widetilde{\bm \alpha}}q^{ \widetilde{\bm{\beta}}}
\bar {q}^{  \widetilde{\bm{\gamma}}}
\end{align*}
the Poisson bracket of $H$ and $G$ is defined by
\begin{align*}
\{H,G\}&=\sqrt{-1}
\sum_{\bm \mu,\bm m
    }
H(\bm \mu)G(\bm m) \cdot (\star),
\end{align*}
where
\begin{align*}
	(\star)=
&
\sum_{\bm j\in\mathbb{Z}^d}
 \left(
    \left(\star\star\right)+\left(\star\star\star\right)
\right)
\prod_{{\bm k}\neq {\bm j}}
J_{\bm k}^{ \widehat{\alpha}_{\bm k}+ \widetilde{\alpha}_{\bm k}}
q_{\bm k}^{\widehat{\beta}_{\bm k}+\widetilde\beta_{\bm k}}
\bar {q}_{\bm k}^{\widehat{\gamma}_{\bm k}+\widetilde\gamma_{\bm k}},
\end{align*}
\begin{equation}\label{bp1}
\left(\star\star\right)=  \epsilon^{-1}
    \left(
      \widehat{\alpha}_{\bm j}(\widetilde{\beta}_{\bm j}-\widetilde{\gamma}_{\bm j} ) +
      \widetilde{\alpha}_{\bm j} \left(\widehat{\gamma}_{\bm j}-\widehat{\beta}_{\bm j}\right)
    \right)
  J_{\bm j}^{\widehat{\alpha}_{\bm j}+\widetilde{\alpha}_{\bm j}-1}
  q_{\bm j}^{\widehat{\beta}_{\bm j}+\widetilde\beta_{\bm j}}
  \bar {q}_{\bm j}^{\widehat{\gamma}_{\bm j}+\widetilde\gamma_{\bm j}},
\end{equation}
and
\begin{equation}\label{bp2}
 \left(\star\star\star\right)=\left(\widehat\beta_{\bm j}\widetilde\gamma_{\bm j}-\widetilde\beta_{\bm j}\gamma_{\bm j}\right)
J_{\bm j}^{\widehat\alpha_{\bm j}+\widetilde{\alpha}_{\bm j}}
q_{\bm j}^{\widehat\beta_{\bm j}+\widetilde\beta_{\bm j}-1}
\bar {q}_{\bm j}^{\widehat\gamma_{\bm j}+\widetilde\gamma_{\bm j}-1}.
\end{equation}
\end{defn}
Using (\ref{bp1}), (\ref{bp2}) and following the proof of (3.16) in Wang-Zhang \cite{WZ09}, we can get the
estimate of the Poisson bracket.

\begin{lem}\label{pb}
	For the given Hamiltonians $H$ and $G$, the Poisson bracket of them can be estimated by
	\begin{align}\label{lem}
		\left|\{H,G\}(\bm n)\right|
		&\leq
		2
		\epsilon^{-1}
		\left(
		2^{|\bm n|}
		\left(\Delta(\bm \mu)+ \Delta(\bm m)\right)
		 \right)
		\left( \left| \bm n \right|+2 \right)^{2}
		|H(\bm \mu)| |G(\bm m)| .
	\end{align}
\end{lem}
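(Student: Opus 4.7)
The plan is to estimate $|\{H,G\}(\bm n)|$ directly from Definition \ref{defpb} by grouping contributions according to the output multi-index $\bm n = (\bm \alpha, \bm \beta, \bm \gamma)$. Every triple $(\bm \mu, \bm m, \bm j)$ appearing in the definition contributes a term to the coefficient of $J^{\bm \alpha} q^{\bm \beta} \bar q^{\bm \gamma}$ provided that the exponents generated by (\ref{bp1}) or (\ref{bp2}) match $\bm n$. My aim is to bound the integer prefactors pointwise, count the admissible triples combinatorially, and then sum.

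First I would write down the matching constraints. The $(\star\star)$ piece in (\ref{bp1}) contributes to $\bm n$ when $\widehat{\bm \alpha}+\widetilde{\bm \alpha}=\bm \alpha + \bm e_{\bm j}$, $\widehat{\bm \beta}+\widetilde{\bm \beta}=\bm \beta$, $\widehat{\bm \gamma}+\widetilde{\bm \gamma}=\bm \gamma$ (where $\bm e_{\bm j}$ is the unit multi-index at site $\bm j$), while $(\star\star\star)$ in (\ref{bp2}) corresponds to $\widehat{\bm \alpha}+\widetilde{\bm \alpha}=\bm \alpha$, $\widehat{\bm \beta}+\widetilde{\bm \beta}=\bm \beta + \bm e_{\bm j}$, $\widehat{\bm \gamma}+\widetilde{\bm \gamma}=\bm \gamma + \bm e_{\bm j}$. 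In either case $\bm j$ must lie in both $\operatorname{supp}\bm \mu$ and $\operatorname{supp}\bm m$, so the $\bm j$-sum is over a finite set.

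Next I would bound the inner factors. Every component of $\widehat{\bm \alpha}, \widehat{\bm \beta}, \widehat{\bm \gamma}, \widetilde{\bm \alpha}, \widetilde{\bm \beta}, \widetilde{\bm \gamma}$ at a given site is bounded by $|\bm n|+2$, so (\ref{bp1}) yields at most $2\epsilon^{-1}(|\bm n|+2)^{2}$ and (\ref{bp2}) yields at most $(|\bm n|+2)^{2}$, accounting for the factors $\epsilon^{-1}$ and $(|\bm n|+2)^2$ in the claim. For the combinatorial multiplicity, the pair $(\widehat{\alpha}_{\bm k},\widetilde{\alpha}_{\bm k})$ is at each site $\bm k$ determined by a single integer in $\{0,\dots,\alpha_{\bm k}+1\}$, and analogously for $\beta_{\bm k}, \gamma_{\bm k}$. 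Multiplying site by site and using $n+2\le 2^{n+1}$ with $|\bm n|=|2\bm \alpha+\bm \beta+\bm \gamma|$ produces the bound $2^{|\bm n|}$ for the number of admissible splittings $(\bm \mu,\bm m)$. Finally, since every admissible $\bm j$ lies in $\operatorname{supp}\bm \mu\cap\operatorname{supp}\bm m$, whose $|\cdot|_1$-extent is controlled by $\min(\Delta(\bm \mu),\Delta(\bm m))\le\Delta(\bm \mu)+\Delta(\bm m)$, the $\bm j$-sum produces the factor $\Delta(\bm \mu)+\Delta(\bm m)$. Multiplying the four estimates and absorbing numerical constants into the overall factor $2$ yields (\ref{lem}).

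The main obstacle is the bookkeeping for the combinatorial multiplicity: one must check that although each site $\bm k$ contributes independent freedom in choosing $(\widehat{\cdot}_{\bm k},\widetilde{\cdot}_{\bm k})$, the product $\prod_{\bm k}(\alpha_{\bm k}+2)(\beta_{\bm k}+2)(\gamma_{\bm k}+2)$ really does compress into $2^{|\bm n|}$ without leaking a factor depending on $|\operatorname{supp}\bm n|$, and that the $\bm j$-sum is controlled by $\Delta(\bm \mu)+\Delta(\bm m)$ rather than a higher power of the diameter. Both estimates are of the same type as in the proof of (3.16) in \cite{WZ09}, whose organization I would follow verbatim, adapting only the notation to account for the $\Z^d$ setting and the presence of the action variables $J_{\bm j}$ through the factor $\epsilon^{-1}$ in (\ref{bp1}).
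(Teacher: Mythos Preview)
Your proposal is correct and matches the paper's own treatment: the paper does not give an independent proof but simply states that the estimate follows from (\ref{bp1}), (\ref{bp2}) together with the argument for (3.16) in \cite{WZ09}, with the extra factor $2\epsilon^{-1}$ arising from the $J$-derivatives in (\ref{bp1}). Your outline of the matching constraints, pointwise bounds on the integer prefactors, combinatorial count of splittings, and control of the $\bm j$-sum is exactly the content of that Wang--Zhang argument, and your explicit acknowledgment that you would follow \cite{WZ09} verbatim is precisely what the paper does.
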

\begin{rem}
	The estimate of the Poisson bracket in Wang-Zhang is given by
	\begin{align}
		\label{WZ}
		\left|\{H,G\}(\bm n)\right|
		&\leq
		\left(
		2^{|\bm n|}
		\left(\Delta(\bm \mu)+ \Delta(\bm m)\right)
		\right)
		\left( \left| \bm n \right|+2 \right)^{2}
		|H(\bm \mu)| |G(\bm m)| .
	\end{align}
    The factor $\epsilon^{-1}$ in the right hand of (\ref{lem}) comes from
	\begin{equation*}
		\frac{\partial J_{\bm j}}{ \partial \bar{q}_{\bm j}}
		=\frac{1}{\epsilon}
		\frac{\partial q_{\bm j} \bar{q}_{\bm j} }{ \partial \bar{q}_{\bm j}}
		=\epsilon^{-1} q_{\bm j}.
	\end{equation*}
	Particularly, when $\widehat{\bm \alpha} = \widetilde{\bm \alpha}=\bm 0$, the estimate (\ref{lem}) is the same as (\ref{WZ}), since (\ref{bp1}) vanishes.
\end{rem}
    
\begin{rem}\label{n+}
	We introduce some facts here.
	For the $\bm n ,\bm m$ and $\bm \mu$ given in the Lemma \ref{pb}, firstly we have 
	     \begin{equation}\label{nmum}
	 	    	|\bm n|=|\bm \mu|+|\bm m|-2,
	 	 \end{equation}
 	 \begin{equation}\label{Dn}
 	 	\Delta(\bm n)\leq \Delta(\bm \mu)+\Delta(\bm m),
 	 \end{equation}
 	 and
 	 \begin{equation*}
 	 |\bm{ \alpha}| = |\bm{ \widehat\alpha}| + |\bm{ \widetilde\alpha}| - 1
 	 \quad \mbox{or} \quad
 	 |\bm{ \alpha}| = |\bm{ \widehat\alpha}| + |\bm{ \widetilde\alpha}| .
 	 \end{equation*}
    Secondly, if we assume that 
    \begin{align}\label{Dm}
    	\Delta(\bm m)\le \frac{|\bm m|-2}{4} \quad \text{and} \quad
    	\Delta(\bm \mu)\le \frac{|\bm \mu|-2}{4},
    \end{align}
    then we have
    \begin{equation}\label{n}
    	\Delta(\bm n)
    	\le 
    	\frac{|\bm n|-2}{4},
    \end{equation}
    where we have used (\ref{nmum}), (\ref{Dn}) and (\ref{Dm}).
    Furthermore, define by
    \begin{equation*}
    	\bm n^+ = \max_{\bm j \in \operatorname{supp} \bm n} |\bm j|_1.
    \end{equation*}
    Then we have that
    \begin{equation}\label{m+}
    	 \bm m^+,\ \bm\mu^+ \le \bm n^+ + d (\Delta(\bm m) +\Delta(\bm \mu)).
    \end{equation}
\end{rem}

\section{The Birkhoff Normal Form}\label{BKN}

We now construct the symplectic transformation $\Gamma$ (by a finite-step  induction) in the spirit of Birkhoff normal form.

\subsection{The First Step}
	At the first step (i.e., $s=1$),  we let
	\begin{align*}
		\mathcal{H}_1:=\widehat{H}(q,\bar q),
	\end{align*}
where $\widehat{H}(q,\bar q)$ is given by (\ref{122401}).
    Then
	we  rewrite $\mathcal H_1$ as
\begin{equation*}
\mathcal{H}_1=D+\mathcal{J}+Z_1+R_1,
\end{equation*}
where
	\begin{align}
		\nonumber D&=\sum_{\bm j\in\mathbb{Z}^d}
		\omega_{\bm j}
		|q_{\bm j}|^2, \quad
			\mathcal{J}=\frac{\epsilon^{2}}2\sum_{\bm j\in\mathbb{Z}^d}J_{\bm j}^2,
		\\ \label{Z1}
		Z_1&=\sum_{
	        |\bm n|=6, \Delta(\bm n)=1
	        \atop
	       |\bm \alpha|=0, \bm \beta = \bm \gamma
	 }
		Z_1(\bm n)
		J^{\bm \alpha} q^{\bm \beta} \bar q^{\bm \gamma},\\
		\nonumber
		R_1&=\sum_{
			|\bm n| = 6,\Delta(\bm n)=1
		\atop
		|\bm \alpha| =  0, \bm \beta \neq \bm \gamma
	    }
		R_1(\bm n)
		J^{\bm \alpha} q^{\bm \beta} \bar q^{\bm \gamma},
	\end{align}
with
	\begin{equation*}
		\left|Z_1(\bm n)\right|, \left|R_1({\bm n})\right|\leq \epsilon^2,
	\end{equation*}	
	which implies
	\begin{equation}\label{122502}
		\left|Z_1(\bm n)\right|
		,
		\left|R_1(\bm n)\right| 
		\le 
		\epsilon^{\frac12\left(\left|\bm n\right|-2\right)},
	\end{equation}
    where using $|\bm n| = 6$.

    \subsubsection{Homological equation}

     Following the standard approach,
     \begin{equation*}
     	\mathcal H_2=\mathcal H_1 \circ X_{F_1}^1,
     \end{equation*}
     where $X_{F_1}^1$ is the symplectic transformation obtained from the Hamiltonian function
     \begin{align}\label{F1}
     	{F}_1=
     	\sum_{
     		 |\bm n|=6
     		, \, \Delta(\bm n)=1\atop |\bm \alpha|=0
     	}F_1(\bm n)J^{\bm \alpha}q^{\bm \beta}\bar q^{\bm \gamma},
     \end{align}
     where
     \begin{equation*}
     	F_1(\bm n)=\frac{R_1(\bm n)}{\sqrt{-1} \sum\limits_{\bm j\in\mathbb{Z}^d}(\beta_{\bm j}-\gamma_{\bm j})\omega_{\bm j}}.
     \end{equation*}
     Since the frequency $\bm  \omega$ satisfies the $(\eta,M)$-nonresonant conditions (\ref{122106})
     , then by (\ref{122502}) one has
    \begin{align*}
    	\left|F_1(\bm n)\right|
    	\leq
    	\eta^{-1} \left(1+\left(\bm \beta-\bm\gamma\right)^-\right)^{3\sigma}
    	(2+10\Delta(\bm \beta-\bm\gamma))^{2d\left|\bm \beta-\bm\gamma\right|}
    	\epsilon^{\frac12\left(\left|\bm n\right|-2\right)}.
    \end{align*}
   Then using the facts that
    \begin{equation} \label{k}
    	\Delta(\bm \beta-\bm \gamma) \le \Delta(\bm n)
    	, \quad
    	|\bm \beta-\bm \gamma| \le |\bm n|
    	, \quad \text{and} \quad
     (\bm\beta-\bm \gamma)^- \le \bm n^+,
    \end{equation}
    one obtains
 \begin{align}\label{111901..1}
    	\left|F_1(\bm n)\right|
    	\leq 
    	\epsilon^{\frac12\left(\left|\bm n\right|-2\right)}\cdot
    	\eta^{-1} \left(1+\bm n^+\right)^{3\sigma}
    	(2+10\Delta(\bm n))^{2d\left|\bm n\right|}
    	.
    \end{align}

    Since
     \begin{align*}
     	\frac{\partial F_1}{\partial \bar{q}_{\bm j}}
     	&=
     \sum_{|\bm n|=6 , \Delta(\bm n)=1\atop |\bm \alpha|=0}
     	\gamma_{\bm j} F_1(\bm n)J^{\bm \alpha}q^{\bm \beta}\bar q^{\bm \gamma - e_{\bm j}},
     \end{align*}
    then using (\ref{111901..1}) and $|\bm n|=6$ we have
     \begin{align}\label{F1q}
     	\left| \frac{\partial F_1}{\partial \bar{q}_{\bm j}} \right|
     	& \le \epsilon^{1.8} \left(1+|\bm j|_1\right)^{-2 \sigma}
     \end{align}
     where noting that
     \begin{equation*}
     	0 \le \bm n^+ - |\bm j|_1   \le d \Delta(\bm n), \quad \bm j \in \operatorname{supp} \bm n.
     \end{equation*}
     Consequently, we get that for $|t| \le 1$, $X_{F_1}^t$ is close to an identity map.
     Further, from (\ref{J}) and (\ref{F1q}) we have
     \begin{align}
     	|J_{\bm j} \circ \Phi_{F_1}^1 |
     	 & \label{Jpsi1.1}
     	 \le (1+\epsilon^{0.5})
     	 \left(1+|\bm j|_1\right)^{-3 \sigma}.
     \end{align}
     In fact, we have
     \begin{align*}
     	\nonumber
     	|J_{\bm j} \circ \Phi_{F_1}^1 |
     	&=|\epsilon^{-1}(I_{\bm j} \circ \Phi_{F_1}^1  - \zeta_{\bm j}  )|
     	\\& \nonumber
     	\le \epsilon^{-1}
     	|I_{\bm j} \circ \Phi_{F_1}^1 -I_{\bm j}|
     	+ \epsilon^{-1}|I_{\bm j}  -\zeta_{\bm j}|
     	\\
     	&\le (1+\epsilon^{0.5})
     	\left(1+|\bm j|_1\right)^{-3 \sigma}
     	,
     \end{align*}
     where the last inequality is based on (\ref{J}) and (\ref{F1q}). 


   \subsubsection{Estimate of $\mathcal{H}_2$}
     Next, define $\left\{H,G\right\}^{(0)}=H$ and
	\begin{align*}
		\left\{H,G\right\}^{(l)}=\left\{
		\{ H , G \}^{(l-1)},G\right\},
		\quad l\geq 1.
	\end{align*}
Using Taylor's formula yields
     \begin{align*}
     	\mathcal{H}_2=&
     	\mathcal{H}_1 \circ X_{F_1}^{1}\\
     	=&D+  \{ D , F_1 \}+\frac{1}{2!} \{ D , F_1 \}^{(2)}+ \cdots
     	\\
     	&+\mathcal{J}+  \{ \mathcal{J} , F_1 \}+\frac{1}{2!} \{ \mathcal{J} , F_1 \}^{(2)}+ \cdots
     	\\
     	&+Z_1+\{ Z_1 , F_1 \} + \frac{1}{2!} \{ Z_1 , F_1 \}^{(2)}+ \cdots
     	\\
     	&+R_1+\{ R_1 , F_1 \} + \frac{1}{2!} \{ R_1 , F_1 \}^{(2)}+ \cdots
     	\\
     	\coloneqq& D+\mathcal{J} + Z_2 + R_2 +
     	 O\left(  \epsilon^{0.24M } \right),
     \end{align*}
     where
     \begin{equation*}
     	Z_2-Z_1=\sum_{
     		|\bm n| = 8, \, \Delta(\bm n) \le 1
     		\atop \bm \beta =\bm \gamma
     	}
     	Z_2(\bm n)
     	J^{\bm \alpha} q^{\bm \beta} \bar q^{\bm \gamma},
     \end{equation*}
        \begin{equation*}
        	R_2=\sum_{
        		8\leq |\bm n|\leq {M}
        		\atop
        		\Delta(\bm n)\leq {M/4}
        	}
        	R_2(\bm n)
        	J^{\bm \alpha} q^{\bm \beta} \bar q^{\bm \gamma},
        \end{equation*}
    and
    \begin{align*}
    	O\left(\epsilon^{0.24M}\right)=
    	\frac{1}{ M^* !}
    	\int_{0}^{1} \left( 1-t \right)^{ M^*  }
    	\{ D+\mathcal{J}+Z_2+R_2 , F_2 \}^{(M^*+1)} \circ X_{F_2}^t \ dt,
    \end{align*}
    with $M^*= [ M/4 ]$.

    Firstly, we will estimate
    $\left|Z_{2}(\bm n)\right|$,
    and
    $\left|R_{2}(\bm n)\right|$.
    It suffices to estimate
    \begin{align*}
    	\{ D , F_1 \}^{(l)}, \
    	\{ \mathcal{J} , F_1 \}^{(l)}, \
    	\{ Z_1 , F_1 \}^{(l)} \ \text{and} \
    	\{ R_1 , F_1 \}^{(l)}, \quad  1 \le l \le M^*.
    \end{align*}

    We start with $	\{ Z_1 , F_1 \}^{(l)}, \ l \ge 1 $.
    When $l=1$,
    	from Lemma \ref{pb} one has
    	\begin{align}
    		\nonumber
    		\left|\{ Z_1 , F_1 \}(\bm n)\right|
    		&\le
    		2
    		\epsilon^{-1}
    		\left(
    		2^{|\bm n|}
    		\left(\Delta(\bm \mu)+ \Delta(\bm m)\right)
    		\right)
    		\left( \left| \bm n \right|+2 \right)^{2}
    		|Z_1(\bm \mu)| |F_1(\bm m)|
    		\\ \nonumber
    		&\le
    		2
    		\epsilon^{-1}
    		\left(
    		2^{|\bm n|}
    		\left(\Delta(\bm \mu)+ \Delta(\bm m)\right)
    		\right)
    		\left( \left| \bm n \right|+2 \right)^{2}
    		\epsilon^{\frac12\left(\left|\bm \mu\right|-2\right)}
    		\epsilon^{\frac12\left(\left|\bm m\right|-2\right)}
    		\\ &\quad \cdot
    		\eta^{-1} \left(1+\bm m^+\right)^{3\sigma}
    		(2+10\Delta(\bm m))^{2d\left|\bm m\right|},
    		\label{z1f1-1}
    	\end{align}
        where the last inequality is based on (\ref{122502}) and (\ref{111901..1}).
    Note that $Z_1,F_1$ is given by (\ref{Z1}) and (\ref{F1}) separately. 
    	Then we have
    	\begin{equation}\label{230110172811}
    		\quad |\bm n| =10,\quad |\bm \alpha| = 0   
        \end{equation} 
    and
        \begin{equation}\label{230110172812}
        	\Delta(\bm n) \le \Delta(\bm \mu) +\Delta(\bm m)  = 2 =(|\bm n|-2)/4 ,
    	\end{equation}
    where the last inequality is based on (\ref{Dn}).
    Further, in view of (\ref{nmum}), (\ref{m+}) and (\ref{230110172812}) we can get that
    	 \begin{align}
    	 	\nonumber
    		(\ref{z1f1-1})
    		&\le  
    		\epsilon^{\frac12 \left(  |\bm n| -2 \right) -1 }
    		2^{|\bm n|-1}
    		(|\bm n| - 2)
    		\left( \left| \bm n \right|+2 \right)^{2}
    	\\&\quad \cdot
    		\eta^{-1} \left(1+\bm n^+\right)^{3 \sigma}
    		(1+d|\bm n|)^{3 \sigma}
    		(3|\bm n|)^{2d\left|\bm n\right|}.
    		\label{23013121}
    	\end{align}
    Further, from (\ref{230110172811}) and the fact $\sigma \ge d+1$, 
    we can simplify (\ref{23013121}) as
    \begin{align}\label{2301101743}
    	\left|\{ Z_1 , F_1 \}(\bm n)\right|    	
    	&
    	\le \frac13
    	(\epsilon/ \eta)^{1+(|\bm n| -2 ) /4}
    	(6d|\bm n|)^{4 \sigma |\bm n|}
    	(1+\bm n^+)^{ \sigma \left(  |\bm n|-6+|\bm \alpha|\right)}.
    \end{align}

    For $l \ge 2$, using Lemma \ref{pb}, Remark \ref{n+} and following the proof of (\ref{2301101743}), by induction we can get
    \begin{align}
    	\left|\{ Z_1 , F_1 \}^{(l)}(\bm n)\right|
        \nonumber
    	&\le
    		\epsilon^{2+l}
    	2^{2l-1}
    	(2^{|\bm n|-2} |\bm n| )^{l}
    	(|\bm n| + 2l)^{2l}
    \\ \nonumber &\quad \cdot
    	\left[
    	\eta^{-1} \left(1+\bm n^+\right)^{3\sigma}
    	(1+d|\bm n|)^{3\sigma}
    	(3|\bm n|)^{2d\left|\bm n\right|}
    	\right]^{l}
    	\\ \nonumber
    	 &\le
    	 \frac13
    	 	(\epsilon / \eta )^{2 +l}
    	(6d|\bm n|)^{4\sigma|\bm n|l}
    	 (1+{\bm n}^+)^{3\sigma l}
    	\\
    	&\le
    	\frac13
    		\left(  \epsilon / \eta \right)^{1 +(|\bm n|-2)/4}
        (6d|\bm n|)^{4 \sigma |\bm n|(|\bm n|-4)}
        (1+\bm n^+)^{ \sigma \left(  |\bm n|-6+|\bm \alpha|\right)}
    	,
    	\label{ZFL} 
    \end{align}
     where the last inequality is based on the fact that
     \begin{equation*}
     	l = (|\bm n|-6)/4 .
     \end{equation*}

   Similarly, for $l \ge 1$ one has 
   \begin{align*}
   	\left|\{ R_1 , F_1 \}^{(l)}(\bm n)\right|
   	&\le
   	\frac13
   		\left(  \epsilon / \eta \right)^{1+(|\bm n|-2)/4}
   	(6d|\bm n|)^{4 \sigma |\bm n|(|\bm n|-4)}
   	  (1+\bm n^+)^{ \sigma \left(  |\bm n|-6+|\bm \alpha|\right)}
   	,
   \end{align*}
   and 
   \begin{align*}
   	\left|\{ D , F_1 \}^{(l)}(\bm n)\right|
   	&\le
   	\frac13
   	\left(  \epsilon / \eta \right)^{1+(|\bm n|-2)/4}
   	(6d|\bm n|)^{4 \sigma |\bm n|(|\bm n|-4)}
   	  (1+\bm n^+)^{ \sigma \left(  |\bm n|-6+|\bm \alpha|\right)}
   	,
   \end{align*}
   where using $\{ D , F_1 \}=-R_1$.

    Now we estimate $	\{ \mathcal{J} , F_1 \}^{(l)}, \ l \ge 1 $. When $l=1$, we have $|\bm n|=8$ and $|\bm \alpha| = 1$. Then following the proof of (\ref{2301101743}), one has
    \begin{align*}
    	\left|\{ \mathcal{J} , F_1 \}(\bm n)\right|
    	&\le
    	\frac13
    		\left(  \epsilon / \eta \right)^{1+|\bm n|/4}
    	(6d|\bm n|)^{4 \sigma |\bm n|}
    	 (1+\bm n^+)^{ \sigma \left(  |\bm n|-6+|\bm \alpha|\right)}.
    \end{align*}
     When $l \ge 2$, we have $l=(|\bm n |-4)/4$, and $|\bm n| \ge 12$, which implies $3l \le |\bm n| -6$. Then following the proof of (\ref{ZFL}), one has
     \begin{align*}
     	\left|\{ \mathcal{J} , F_1 \}^{(l)}(\bm n)\right|
     	&\le
     	\frac13
     		\left(  \epsilon / \eta \right)^{2+l}
     	(6d|\bm n|)^{4 \sigma |\bm n|l}
     	  (1+\bm n^+)^{3 \sigma l}
     	\\
     	&\le
     	\frac13
     		\left(  \epsilon / \eta \right)^{1+|\bm n|/4}
     	(6d|\bm n|)^{4 \sigma |\bm n|(|\bm n|-4)}
     	 (1+\bm n^+)^{ \sigma \left(  |\bm n|-6+|\bm \alpha|\right)}.
     \end{align*}

     In conclusion, we have that
     \begin{align}\label{ZR2}
     	\left| Z_2{(\bm n)} \right| + \left| R_2{(\bm n)} \right|
     	&\le
     		\left(  \epsilon / \eta \right)^{1+(|\bm n|-2)/4}
     	(6d|\bm n|)^{4 \sigma |\bm n|(|\bm n|-4)}
     	  (1+\bm n^+)^{ \sigma \left(  |\bm n|-6+|\bm \alpha|\right)}
     	.
     \end{align}
     
     Finally, we have
    \begin{align*}
    	\left|\left\{ I_{\bm j} , O\left(\epsilon^{0.24M}\right) \right\}\right| \le  \epsilon^{0.24M} (6dM)^{4 \sigma M^2} (1+|\bm j|_1)^{-6 \sigma}
    	.
    \end{align*}
    The detail of the proof will be given in the next subsection (cf. (\ref{IW2})).

\subsection{The Second Step}
    Following the standard approach,
    \begin{equation*}
    	\mathcal H_3=\mathcal H_2 \circ X_{F_2}^1,
    \end{equation*}
    where $X_{F_2}^1$ is the symplectic transformation obtained from the Hamiltonian function
    \begin{align*}
    	{F}_2=
    	\sum_{
    		|\bm n|=8, \, \Delta(\bm n) \le 1
    		\atop |\bm \alpha| \le 1
    	}F_2(\bm n)J^{\bm \alpha}q^{\bm \beta}\bar q^{\bm \gamma},
    \end{align*}
    where
    \begin{equation*}
    	F_2(\bm n)=\frac{R_2(\bm n)}{\sqrt{-1} \sum\limits_{\bm j\in\mathbb{Z}^d}(\beta_{\bm j}-\gamma_{\bm j})\omega_{\bm j}}.
    \end{equation*}
    Since the frequency $\bm \omega$ satisfies the $(\eta,M)$-nonresonant conditions (\ref{122106}),
     then we have
    \begin{align*}
    	\nonumber
    	\left|F_2(\bm n)\right|
    	&\leq
    		\left(  \epsilon / \eta \right)^{1+(|\bm n|-2)/4}
    	(6d|\bm n|)^{4\sigma|\bm n|(|\bm n|-4)}
    	  (1+\bm n^+)^{ \sigma \left(  |\bm n|-6+|\bm \alpha|\right)}
    	\\ & \quad \cdot  	
    	\eta^{-1} \left(1+\bm n^+\right)^{3\sigma}
    	(2+10\Delta(\bm n))^{2d\left|\bm n\right|}
    	,
    \end{align*}
    where we have used (\ref{k}).
    Furthermore, one has
    \begin{align}
    	\frac{\partial F_2}{\partial \bar{q}_{\bm j}}
    	&=
    	\sum_{|\bm n|=8,\, \Delta(\bm n) \le 1
    	\atop |\bm \alpha| \le 1}
    \left(
    		     	\alpha_{\bm j} F_2(\bm n)J^{\bm \alpha - e_{\bm j}}q^{\bm \beta + e_{\bm j}}\bar q^{\bm \gamma }
    		     	+
    	\gamma_{\bm j} F_2(\bm n)J^{\bm \alpha}q^{\bm \beta}\bar q^{\bm \gamma - e_{\bm j}}
    	\right).
    	\label{F2q}
    \end{align}
    In view of (\ref{J}) and following the proof of (\ref{F1q}), we can conclude that the second term in the right-hand of (\ref{F2q}) can be estimated by
    \begin{align*}
    	\frac{1}{2}\epsilon^{2}(1+|\bm j|_1)^{-2\sigma},
    \end{align*}
    and the first term in the right-hand of (\ref{F2q}) can be estimated by
    \begin{align*}
    	\frac12 \epsilon^{2}(1+|\bm j|_1)^{-\sigma}.
    \end{align*}
    
     Consequently, we have
    \begin{align}\label{Fq2}
    	\left| \frac{\partial F_2}{\partial \bar{q}_{\bm j}} \right|
    	& \le 
    	\epsilon^{2} \left(1+|\bm j|_1\right)^{-\sigma}
    \end{align}
and
    \begin{equation*} 
    	|J_{\bm j} \circ \Phi_{F_1}^1 |
    	\le \left(1+\sum_{h=1}^{2}\epsilon^{0.5h}\right)
    	\left(1+|\bm j|_1\right)^{-2\sigma},
    \end{equation*}
    where the last inequality is based on (\ref{Fq2}) and following the proof of (\ref{Jpsi1.1}).
    
    In fact, we can get a better estimate of $|J_{\bm j} \circ \Phi_{F_1}^1 |$.
    Note that
    \begin{align}
    	\nonumber
    	\frac{d}{dt} I_{\bm j} \circ \Phi_{F_2}^t
    	&=\{ I_{\bm j} , F_2 \} \circ \Phi_{F_2}^t
    	\\
    	\nonumber
    	&=
    	\sum_{
    		|\bm n|=8, \, \Delta(\bm n) \le 1
    		\atop |\bm \alpha| \le 1
    	}F_2(\bm n)
    	\left\{ q_{\bm j} \bar{q}_{\bm j}
    	, J^{\bm \alpha}q^{\bm \beta}\bar q^{\bm \gamma} \right\} \circ \Phi_{F_2}^t \\
    	\label{dIF2}
    	&=
    	\sum_{
    		|\bm n|=8, \, \Delta(\bm n) \le 1
    		\atop |\bm \alpha|\le 1
    	}F_2(\bm n)
    	(\gamma_{\bm j}  -\beta_{\bm j})
    	J^{\bm \alpha}q^{\bm \beta}\bar q^{\bm \gamma}
    	\circ \Phi_{F_2}^t
    	.
    \end{align}
       From (\ref{dIF2}), we can conclude that
        \begin{align}\label{IF2}
        	|I_{\bm j} \circ \Phi_{F_2}^1 -I_{\bm j}|
        	&\le \epsilon^{2} (1+|\bm j|_1)^{-3\sigma}.
        \end{align}
        Then we have
        \begin{align}
        	\nonumber
        	|J_{\bm j} \circ \Phi_{F_2}^1 |
        	&=|\epsilon^{-1}(I_{\bm j} \circ \Phi_{F_2}^1  -  \zeta_{\bm j}  )|
        	\\\nonumber &
        	\le \epsilon^{-1}
        	 |I_{\bm j} \circ \Phi_{F_2}^1 -I_{\bm j}|
        	+ |J_{\bm j}|
        	\\ \label{Jpsi2}&
        	\le \left(1+\sum_{h=1}^{2}\epsilon^{0.5h}\right)(1+|\bm j|_1)^{-3\sigma},
        \end{align}
        where the last inequality is based on (\ref{Jpsi1.1}) and (\ref{IF2}).

    Next, using Taylor's formula yields
    \begin{align*}
    	\mathcal{H}_3=&
    	\mathcal{H}_2 \circ X_{F_2}^{1}\\
    	=&D+  \{ D , F_2 \}+\frac{1}{2!} \{ D , F_2 \}^{(2)}+ \cdots
    	\\
    	&+\mathcal{J}+  \{ \mathcal{J} , F_2 \}+\frac{1}{2!} \{ \mathcal{J} , F_2 \}^{(2)}+ \cdots
    	\\
    	&+Z_2+\{ Z_2 , F_2 \} + \frac{1}{2!} \{ Z_2 , F_2 \}^{(2)}+ \cdots
    	\\
    	&+R_2+\{ R_2 , F_2 \} + \frac{1}{2!} \{ R_2 , F_2 \}^{(2)}+ \cdots
    	\\
    	\coloneqq& D+\mathcal{J} + Z_3 + R_3 +  O\left( \epsilon^{0.24M} \right),
    \end{align*}
    where
    \begin{equation*}
    	Z_3-Z_2=\sum_{
    		|\bm n| = 10
    		, \,
    		\bm \beta=\bm\gamma
    	}
    	Z_3(\bm n)
    	J^{\bm \alpha} q^{\bm \beta} \bar q^{\bm \gamma},
    \end{equation*}
    \begin{equation*}
    	R_3=\sum_{
    		10\leq |\bm n|\leq M
    		\atop
    		\Delta(\bm n)\leq M/4
    	}
    	R_3(\bm n)
    	J^{\bm \alpha} q^{\bm \beta} \bar q^{\bm \gamma},
    \end{equation*}
    and
    \begin{align*}
    	O\left(\epsilon^{0.24M}\right)=
    	\frac{1}{ M^* !}
    	\int_{0}^{1} \left( 1-t \right)^{ M^*  }
    	\{ D+\mathcal{J}+Z_2+R_2 , F_2 \}^{(M^*+1)} \circ X_{F_2}^t \ dt,
    \end{align*}
    with $M^*= [ M/4 ]$.

    Firstly, we estimate
    $\left|Z_{3}(\bm n)\right|$
    and
    $\left|R_{3}(\bm n)\right|$.
    According to the proof of (\ref{ZR2}). It suffices to estimate
    $\{ Z_2,F_2 \}$. In fact, we have
    \begin{align*}
    	\left|\{ Z_2 , F_2 \}(\bm n)\right|
    	&\le
    	2 \epsilon^{-1}
    	\left(
    	2^{|\bm n|}
    	\left(|\bm n| -2 \right) /4
    	\right)
    	\left( \left| \bm n \right|+2 \right)^{2}
    	\\
    	& \quad \cdot
    		\left(  \epsilon / \eta \right)^{1+(|\bm \mu|-2)/4}
    	(6d|\bm \mu|)^{4\sigma|\bm \mu|(|\bm \mu|-4)}
    	  (1+\bm \mu^+)^{\sigma \left(  |\bm \mu| - 6 +|\bm{ \widehat\alpha}| \right) }
    	\\
    	& \quad \cdot
    		\left(  \epsilon / \eta \right)^{1+(|\bm m|-2)/4}
    	(6d|\bm m|)^{4\sigma|\bm m|(|\bm m|-4)}
    	 (1+\bm m^+)^{\sigma  \left( |\bm m| - 6 +|\widetilde {\bm \alpha }| \right)}
    	\\ & \quad \cdot  	
    	\eta^{-1} \left(1+\bm m^+\right)^{3\sigma}
    	(3|\bm m|)^{2d\left|\bm m\right|}
    	\\ & \le
    	\frac13
    	 (\epsilon/\eta)^{1+(|\bm n|-2)/4}
    		(6d|\bm n|)^{4\sigma|\bm n|}
    	(6d|\bm n|)^{4\sigma|\bm n|(|\bm n|-4-2)}
    	 (1+\bm n^+)^{ \sigma \left(  |\bm n|-6+|\bm \alpha|\right)}
    	\\
    	&\le
    	\frac{1}{3}
    	 (\epsilon/\eta)^{1+(|\bm n|-2)/4}
    	(6d|\bm n|)^{4\sigma|\bm n|(|\bm n|-4)}
    	 (1+\bm n^+)^{ \sigma \left(  |\bm n|-6+|\bm \alpha|\right)}
    	.
    \end{align*}
%
%
    Consequently, we have
    \begin{align}\label{Z3}
    	\left| Z_3{(\bm n)} \right| + \left| R_3{(\bm n)} \right|
    	&\le
    		(\epsilon/\eta)^{1+(|\bm n| - 2)/4}
    	(6d|\bm n|)^{4\sigma|\bm n|(|\bm n| -4)}
    	 (1+\bm n^+)^{ \sigma \left(  |\bm n|-6+|\bm \alpha|\right)}
    	.
    \end{align}

    Finally, in view of (\ref{dIF2}), we can conclude that
    \begin{align}
    	\label{IW2}
    	\left|\left\{ I_{\bm j} , O\left(\epsilon^{0.24M}\right) \right\}\right| \le  \epsilon^{0.24M} (6dM)^{4\sigma M^2} (1+|\bm j|_1)^{-6\sigma}.
    \end{align}

\subsection{The General Step}

\begin{lem}[\textbf{Iterative Lemma}]\label{62}
For $3 \le s \le (M-4)/2$, consider the Hamiltonian
\begin{eqnarray*}
\mathcal{H}_s=D+\mathcal{J}+Z_s+R_s+ O\left(\epsilon^{0.24M}\right),
\end{eqnarray*}
where 
\begin{equation*}
	Z_s-Z_{s-1}=\sum_{
		|\bm n| = 2s+4, 
		\atop \bm \beta =\bm \gamma
	}
	Z_s(\bm n)
	J^{\bm \alpha} q^{\bm \beta} \bar q^{\bm \gamma},
\end{equation*}
\begin{equation*}
	R_{s}=\sum_{
		2s+4\leq |\bm n|\leq M
		\atop
		\Delta(\bm n)\leq M/4
	}
	R_s(\bm n)
	J^{\bm \alpha} q^{\bm \beta} \bar q^{\bm \gamma},
\end{equation*}
and
\begin{align*}
	O\left(\epsilon^{0.24M}\right)=
	\frac{1}{ M^* !}
	\int_{0}^{1} \left( 1-t \right)^{ M^*  }
	\{ D+\mathcal{J}+Z_s+R_s , F_s \}^{(M^*+1)} \circ X_{F_s}^t \ dt,
\end{align*}
with $M^*= [ M/4 ]$.
Let $\bm \omega$ satisfy the $(\eta,M)$-nonresonant conditions \eqref{122106}
and we assume that
\begin{align}
	 \label{Zs}
	\left| Z_s (\bm n)\right| + \left| R_s (\bm n)\right|
	&\le
		(\epsilon/\eta)^{1+(|\bm n| - 2)/4}
	(6d|\bm n|)^{4\sigma|\bm n|(|\bm n| -4)}
	 (1+\bm n^+)^{ \sigma \left(  |\bm n|-6+|\bm \alpha|\right)}
	,
\end{align}
and
$ \Delta(\bm n) \le  (|\bm n|-2)/4$
in $Z_s+R_s$.

Then there exists a change of variables $X_{F_s}^1$ generated by the hamiltonian function $F_s$ satisfying 
\begin{align}\label{Fsq}
	\left| \frac{\partial F_s}{\partial \bar{q}_{\bm j}} \right|
	& \le \epsilon^{1+0.5s} \left(1+|\bm j|_1\right)^{-\sigma}
\end{align}
and
\begin{align}\label{Ipsis}
	|I_{\bm j} \circ \Phi_{F_s}^1 -I_{\bm j}|
	&\le \epsilon^{1+0.5s} (1+|\bm j|_1)^{-3\sigma},
\end{align}
such that
\begin{align*}
     \mathcal{H}_{s+1} &=\mathcal{H}_s\circ X_{F_s}^1
    =D+\mathcal{J}+Z_{s+1}+R_{s+1}
    + O\left(\epsilon^{0.24M}\right)
    ,
\end{align*}
where 
\begin{equation*}
	Z_{s+1}-Z_{s}=\sum_{
		|\bm n| = 2s+6, 
		\atop \bm \beta =\bm \gamma
	}
	Z_{s+1}(\bm n)
	J^{\bm \alpha} q^{\bm \beta} \bar q^{\bm \gamma},
\end{equation*}
and
\begin{equation*}
	R_{s+1}=\sum_{
		2s+6\leq |\bm n|\leq M
		\atop
		\Delta(\bm n)\leq M/4
	}
	R_{s+1}(\bm n)
	J^{\bm \alpha} q^{\bm \beta} \bar q^{\bm \gamma}.
\end{equation*}
Moreover, one has
\begin{align*}
	\left| Z_{s+1} (\bm n)\right| + \left| R_{s+1} (\bm n)\right|
	&\le
		(\epsilon/\eta)^{1+(|\bm n| - 2)/4}
	(6d|\bm n|)^{4\sigma|\bm n|(|\bm n| -4)}
	 (1+\bm n^+)^{ \sigma \left(  |\bm n|-6+|\bm \alpha|\right)}
	,
\end{align*}
and
$	\Delta(\bm n) \le (|\bm n|-2)/4 $
in $Z_{s+1}+R_{s+1}$.

\end{lem}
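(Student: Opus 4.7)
The plan is to imitate the first two steps in full generality. First I would introduce the generating Hamiltonian
$$F_s = \sum_{|\bm n|=2s+4,\; \bm\beta\neq\bm\gamma} F_s(\bm n)\, J^{\bm\alpha} q^{\bm\beta} \bar q^{\bm\gamma}, \qquad F_s(\bm n) = \frac{R_s(\bm n)}{\sqrt{-1}\sum_{\bm j}(\beta_{\bm j}-\gamma_{\bm j})\omega_{\bm j}},$$
so that $\{D,F_s\}$ cancels the lowest-order non-resonant part of $R_s$ (the terms with $|\bm n|=2s+4$, $\bm\beta\neq\bm\gamma$). Combining the inductive hypothesis \eqref{Zs} with the $(\eta,M)$-non-resonant condition \eqref{122106} and using \eqref{k} to replace $(\bm\beta-\bm\gamma)^-$ by $\bm n^+$ yields a bound on $|F_s(\bm n)|$ of the shape $(\epsilon/\eta)^{1+(|\bm n|-2)/4}$ multiplied by the combinatorial factor $(6d|\bm n|)^{4\sigma|\bm n|(|\bm n|-4)}$, the polynomial weight $(1+\bm n^+)^{\sigma(|\bm n|-6+|\bm\alpha|)}$, and the small-divisor losses $\eta^{-1}(1+\bm n^+)^{3\sigma}(2+10\Delta(\bm n))^{2d|\bm n|}$, exactly as in the derivations leading to \eqref{111901..1} and its second-step analogue.

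Next I would pass to $\partial F_s/\partial\bar q_{\bm j}$. As in \eqref{F2q}, this derivative splits into two pieces: one from $\partial J/\partial\bar q=\epsilon^{-1}q$ and one from $\partial\bar q/\partial\bar q$. Each can be summed term-by-term using $\|q\|_\sigma\le\epsilon$ and the bound $|J_{\bm j}|\le(1+|\bm j|_1)^{-3\sigma}$ from \eqref{J}, producing \eqref{Fsq}. Coupled with the identity
$$\frac{d}{dt}I_{\bm j}\circ\Phi_{F_s}^t=\{I_{\bm j},F_s\}\circ\Phi_{F_s}^t$$
(used in \eqref{dIF2}), this gives \eqref{Ipsis} because $\{I_{\bm j},F_s\}$ extracts a single $q_{\bm j}$ or $\bar q_{\bm j}$ and therefore gains an extra weight $(1+|\bm j|_1)^{-2\sigma}$.

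Having established that $X_{F_s}^1$ is close to the identity, I would Taylor-expand
$$\mathcal{H}_{s+1}=\mathcal{H}_s\circ X_{F_s}^1=D+\mathcal{J}+Z_s+R_s+\sum_{l=1}^{M^*}\frac{1}{l!}\{D+\mathcal{J}+Z_s+R_s,F_s\}^{(l)}+O(\epsilon^{0.24M}),$$
truncating at $M^*=\lfloor M/4\rfloor$. The homological cancellation $\{D,F_s\}+R_s^{\mathrm{low,nr}}=0$ removes the lowest non-resonant piece of $R_s$; the surviving diagonal contributions at $|\bm n|=2s+6$ (produced by $\{\mathcal{J},F_s\}$ and the $\bm\beta=\bm\gamma$ part of the higher brackets) are absorbed into $Z_{s+1}-Z_s$, while all other surviving terms of degree $\ge 2s+6$ are gathered into $R_{s+1}$. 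The coefficient bound is then purely inductive: for each of $\{D,F_s\}^{(l)}$, $\{\mathcal{J},F_s\}^{(l)}$, $\{Z_s,F_s\}^{(l)}$, $\{R_s,F_s\}^{(l)}$, Lemma \ref{pb} multiplies two coefficients of the form \eqref{Zs}, and using \eqref{nmum}, \eqref{Dn}, \eqref{m+} together with the factorial $1/l!$ the product collapses to the desired bound with $|\bm n|$ in place of $|\bm\mu|,|\bm m|$, just as in the calculation that led to \eqref{ZFL} and in its second-step repetition leading to \eqref{Z3}.

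The main obstacle is bookkeeping rather than new ideas. Two invariants must be preserved throughout: the geometric constraint $\Delta(\bm n)\le(|\bm n|-2)/4$, which propagates automatically from \eqref{Dm}--\eqref{n}; and the polynomial weight $(1+\bm n^+)^{\sigma(|\bm n|-6+|\bm\alpha|)}$, whose exponent has to absorb both the $3\sigma$ coming from the small-divisor denominator of $F_s$ and the $3\sigma$ from $(1+d|\bm n|)^{3\sigma}$, which is where the assumption $\sigma\ge d+1$ enters essentially, as in the transition from \eqref{z1f1-1} to \eqref{2301101743}. Once these checks are carried out, the inductive conclusion follows exactly in parallel with the $s=2\to 3$ step, and the remainder term $O(\epsilon^{0.24M})$ is controlled via \eqref{IW2}.
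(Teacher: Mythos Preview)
Your proposal is correct and follows essentially the same route as the paper. The paper's own proof is deliberately terse: it writes down $F_s$ exactly as you do, bounds $|F_s(\bm n)|$ via the $(\eta,M)$-nonresonant condition together with \eqref{k}, and then for each of \eqref{Fsq}, \eqref{Ipsis}, the coefficient bounds on $Z_{s+1},R_{s+1}$, and the $\Delta(\bm n)$ constraint, simply points back to the corresponding second-step computations (\eqref{Fq2}, \eqref{IF2}, \eqref{Z3}, \eqref{IW2}, and \eqref{n}) rather than repeating them, which is precisely your plan.
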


\begin{proof}
First, as a standard Birkhoff normal form technique, we know that
\begin{align*}
{F}_s=
\sum_{
	|\bm n|=2s+4
    }
  F_s(\bm n)J^{\bm \alpha}q^{\bm \beta}\bar q^{\bm \gamma},
\end{align*}
where
\begin{equation}\label{Fs}
F_s(\bm n)=\frac{R_s(\bm n)}{\sqrt{-1} \sum\limits_{\bm j\in\mathbb{Z}^d}(\beta_{\bm j}-\gamma_{\bm j}){\omega}_{\bm j}}.
\end{equation}
Since the frequency $ \bm \omega$ satisfies the $(\eta,M)$-nonresonant conditions (\ref{122106}),
 combining (\ref{Fs}) we get
\begin{align}
	\nonumber
\left|F_s(\bm n)\right|
&\leq
	\left(  \epsilon / \eta \right)^{1+(|\bm n|-2)/4}
(6d|\bm n|)^{4\sigma|\bm n|(|\bm n|-4)}
 (1+\bm n^+)^{ \sigma \left(  |\bm n|-6+|\bm \alpha|\right)}
\\ \label{Fsn}
& \quad \cdot  	
\eta^{-1} \left(1+\bm n^+\right)^{3\sigma}
(2+10\Delta(\bm n))^{2d\left|\bm n\right|}
\end{align}
where we have used (\ref{k}).
From (\ref{Fsn}) and following the proof of (\ref{Fq2}) in the second step, we have (\ref{Fsq}).
Furthermore, following the proof of (\ref{IF2}) we have (\ref{Ipsis}).

Next, using Taylor's formula yields
    \begin{align*}
    	\mathcal{H}_{s+1}=&
    	\mathcal{H}_s \circ X_{F_s}^{1}
    	\\
    	=&D+  \{ D , F_s \}+\frac{1}{2!} \{ D , F_s \}^{(2)}+ \cdots
    	\\
    	&+\mathcal{J}+  \{ \mathcal{J} , F_s \}+\frac{1}{2!} \{ \mathcal{J} , F_s \}^{(2)}+ \cdots
    	\\
    	&+Z_s+\{ Z_s , F_s \} + \frac{1}{2!} \{ Z_s , F_s \}^{(2)}+ \cdots
    	\\
    	&+R_s+\{ R_s , F_s \} + \frac{1}{2!} \{ R_s , F_s \}^{(2)}+ \cdots
    	\\
    	\coloneqq & D+ \mathcal{J} +Z_{s+1} +R_{s+1} + O\left(\epsilon^{0.24M}\right),
    \end{align*}
    where
    $$Z_{s+1} - Z_{s} = \sum_{|\bm n|=2s+6 \atop \bm \beta = \bm \gamma} Z_{s+1}(\bm n) J^{\bm \alpha} q^{\bm \beta} \bar{q}^{\bm \gamma},$$
    and
    $$R_{s+1}=\sum_{ 2s+6 \le |\bm n| \le M \atop \Delta(\bm n) \le M/4} R_{s+1}(\bm n) J^{\bm \alpha} q^{\bm \beta} \bar{q}^{\bm \gamma}.$$

    In view of (\ref{Zs}) and following the estimate of (\ref{Z3}) and (\ref{IW2}) in the second step, we get
    \begin{align*}
    	\left| Z_{s+1}{(\bm n)} \right| + \left| R_{s+1}{(\bm n)} \right|
    	 \le 	(\epsilon/\eta)^{1+(|\bm n| - 2)/4}
    	(6d|\bm n|)^{4\sigma|\bm n|(|\bm n| -4)}
    	 (1+\bm n^+)^{ \sigma \left(  |\bm n|-6+|\bm \alpha|\right)}
    	,
    \end{align*}
    and
    \begin{align*}
    	\left|\left\{ I_{\bm j} , O\left(\epsilon^{0.24M}\right) \right\}\right| \le  \epsilon^{0.24M} (6dM)^{4\sigma M^2} (1+|\bm j|_1)^{-6\sigma}.
    \end{align*}
    Furthermore, in view of the second conclusion in Remark \ref{n+} (cf.(\ref{n})), we conclude by induction $ \Delta(\bm n) \le (|\bm n|-2)/4$  in $Z_{s+1}+R_{s+1}$.

\end{proof}
    \begin{rem}
    	Form (\ref{Ipsis}), and following the proof of (\ref{Jpsi2}) we can conclude that
    	\begin{align}\label{Jpsis}
    		|J_{\bm j} \circ \Phi_{F_s}^1 |
    		\le
    		\left( 1 + \sum_{h=1}^{s} \epsilon^{0.5h} \right) (1+|\bm j|_1)^{-3\sigma} ,
    	\end{align}
        which we have used during the iteration.
    \end{rem}

\section{Estimate on the measure}\label{Em}

    \begin{lem}
    	For the set $\Omega$ given by (\ref{122302}),
    	there exists a set $\Omega'
    	\subset \Omega
    	$ satisfying
    	\begin{equation}\label{122304}\operatorname{mes} \left(\Omega'\right) < \eta,
    \end{equation}
    	such that for any $\zeta \in  \Omega \setminus \Omega' $, the frequency $\bm \omega$ given by (\ref{122303})
    	are $(\eta,M)$-nonresonant.
    \end{lem}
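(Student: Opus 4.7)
The plan is a union-bound argument over the resonance vectors $\bm k$, extracting the required randomness entirely from the inner parameters $\zeta$ (since no hypothesis is placed on $v$). For each $\bm k \neq \bm 0$ with $|\bm k|,\Delta(\bm k) \leq M$, let $\Omega_{\bm k}$ denote the set of $\zeta \in \Omega$ on which (\ref{122106}) fails. I would bound $\sum_{\bm k} {\rm mes}(\Omega_{\bm k})$ by $\eta$, up to rescaling $\eta$ by an absolute constant, and take $\Omega' = \bigcup_{\bm k} \Omega_{\bm k}$.

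For the single-resonance estimate, fix such a $\bm k$ and pick $\bm j_0 \in {\rm supp}(\bm k)$ with $|\bm j_0|_1 = \bm k^-$. Since $\omega_{\bm j} = \epsilon^{-2}v_{\bm j}+\zeta_{\bm j}$, the small divisor $\sum_{\bm j} k_{\bm j}\omega_{\bm j}$, viewed as a function of $\zeta_{\bm j_0}$ with all other $\zeta$-coordinates frozen, is affine with slope $|k_{\bm j_0}| \geq 1$. Under the normalized product measure on $\Omega$ the marginal of $\zeta_{\bm j_0}$ is uniform on $[0,(1+\bm k^-)^{-2\sigma}]$, hence has density $(1+\bm k^-)^{2\sigma}$. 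Setting $\delta_{\bm k}=\eta/\bigl[(1+\bm k^-)^{3\sigma}(2+10\Delta(\bm k))^{2d|\bm k|}\bigr]$ and applying Fubini gives
\[
{\rm mes}(\Omega_{\bm k}) \leq \frac{2\eta}{|k_{\bm j_0}|(1+\bm k^-)^{\sigma}(2+10\Delta(\bm k))^{2d|\bm k|}},
\]
after one factor of $(1+\bm k^-)^{2\sigma}$ cancels against the density. This is the underlying motivation for the $3\sigma$ in (\ref{122106}): two copies of $\sigma$ are consumed by the density blow-up at spatial infinity, while the third one furnishes the decay in $\bm j_0$ needed for summability.

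For the counting and summation, I would reorganize the union bound by first fixing $\bm j_0 \in \mathbb{Z}^d$, then $n = |\bm k|$ and $D = \Delta(\bm k)$, and finally $\bm k$ itself. Since ${\rm supp}(\bm k)\subseteq B(\bm j_0,D)$, which contains at most $(2D+1)^d$ lattice points, and $\sum_{\bm j}|k_{\bm j}|=n$, a standard stars-and-bars count bounds the number of admissible $\bm k$ per triple $(\bm j_0,n,D)$ by $\bigl(C(2D+1)^d\bigr)^n$ for an absolute constant $C$. The decisive quantitative point is that this combinatorial growth is beaten by the denominator $(2+10D)^{2dn}$: for $D \geq 1$ the ratio is bounded by $\bigl(C'/(100^d D^d)\bigr)^n$, which is geometrically small in $n$ and summable in $D$, while $D=0$ forces $\bm k = \pm n\,e_{\bm j_0}$ with trivial count. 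Hence the double sum over $(n,D)$ is bounded by an absolute constant, and
\[
{\rm mes}(\Omega') \leq C''\eta\sum_{\bm j_0\in\mathbb{Z}^d}(1+|\bm j_0|_1)^{-\sigma},
\]
which is finite precisely because $\sigma \geq d+1$. Running the argument with $\eta$ replaced by the appropriate absolute multiple at the outset then yields (\ref{122304}).

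The only genuine difficulty is the three-way balance between the exponents $3\sigma$, $2\sigma$, and $\sigma$ appearing respectively in the resonance threshold, the $\zeta$-box size, and the summability tail over $\bm j_0$, coupled with the balance between the combinatorial explosion $\sim D^{dn}$ of admissible resonances at diameter $D$ and the geometric suppression $(2+10D)^{2d|\bm k|}$ in (\ref{122106}). This is exactly the inner-parameter difficulty flagged in the introduction: because $\zeta_{\bm j}$ must itself decay like $|\bm j|^{-2\sigma}$, the conditional density of any fixed coordinate grows like $|\bm j|^{2\sigma}$, so the weights used in \cite{BFG88} would not suffice here. Once the two balances above are reconciled by the specific shape of the weight in (\ref{122106}), the remainder is routine bookkeeping.
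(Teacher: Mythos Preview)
Your proposal is correct and follows essentially the same route as the paper: union bound over $\bm k$, a single-resonance measure estimate via the affine dependence of the small divisor on $\zeta_{\bm j_0}$ with $|\bm j_0|_1=\bm k^-$ (which is exactly how the paper obtains its unexplained bound ${\rm mes}(\mathfrak{R}(\bm k))<\eta\,(1+\bm k^-)^{-\sigma}(2+10\Delta(\bm k))^{-2d|\bm k|}$), anchoring the count at that point, and summing over $\bm j_0\in\Z^d$ using $\sigma\geq d+1$. The only cosmetic differences are that the paper confines ${\rm supp}\,\bm k$ to an $\ell^\infty$-box of radius $M$ rather than your tighter radius $D$, and that its constants in \eqref{122106} are tuned to give ${\rm mes}(\Omega')<\eta$ on the nose, whereas you absorb an absolute constant by rescaling $\eta$; neither point is substantive.
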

    \begin{proof}
    Define the set
    $\mathfrak{R}(\bm k)$ by
    	\begin{align*}
    		\left\{
    		\zeta\in\Omega
    		:\
    		\left|\sum_{\bm j\in\mathbb{Z}^d}
    		k_{\bm j}
           {\omega}_{\bm j}
    		\right|
    		<
    		\frac{\eta}
    		{ (1+\bm k^-)^{3\sigma}
    			(2+10\Delta(\bm k))^{2d\left|\bm k\right|} }
    		\right\},
    	\end{align*}
        where $\sigma\ge d+1$.
    	Let
    	\begin{equation*}
    		\Omega'=
    		\bigcup_{
    			 |\bm k|,\Delta(\bm k) \leq M}
    		\mathfrak{R}(\bm k),
    	\end{equation*}
    	and it is easy to see that for each $ \zeta \in  \Omega \setminus \Omega'$ the frequency  $\bm \omega$ are $(\eta,M)$-nonresonant.

    Now it suffices to prove the estimate (\ref{122304}) holds. Firstly, one has
    	\begin{align*}
    		{\rm mes}(\mathfrak{R}(\bm k)) <
    		\frac{\eta }
    		{(1+\bm k^-)^{\sigma} \left(2+10\Delta(\bm k)\right)^{2d\left|\bm k\right|}
    		}.
    	\end{align*}

        Secondly, for any given $\bm k$ satisfying $|\bm k| ,\Delta(\bm k) \le M$, there exists $\bm s \in \mathbb{Z}^d$ satisfying $|\bm s|_1=\bm k^-$ such that $\operatorname{supp} \bm k \subset \mathcal{B}(\bm s)$, where 
        $$\mathcal{B}(\bm s) = \left\{ \bm j \Big| |\bm j - \bm s|_\infty \leq M \right\}.$$
Note that the number of $\bm k$ satisfying $\operatorname{supp} \bm k \subset \mathcal{B}(\bm s)$ is no more than
    \begin{equation*}
    	\sum_{i=0}^{M} \sum_{j=6}^{M} 
    	(2i+2)^{dj},
    \end{equation*} 
    where we take $\Delta(\bm k)=i$, and $|\bm k|=j$.
	Then we have that,
    	\begin{equation*}
    		{\rm mes} \left(\Omega'\right)  \le
    		\sum_{\bm s \in \mathbb{Z}^d}
    		\sum_{i=0}^{M} \sum_{j=6}^{M} 
    		(2i+2)^{dj}
    		\frac{\eta}
    		{(1+|\bm s|_1)^{\sigma} (2+10i)^{2dj}
    		}
    		<\eta.
    	\end{equation*}
    \end{proof}

\section{The proof of main theorem}\label{mainthm}
\subsection{Proof of Theorem \ref{main}}
\begin{proof}
	From Lemma \ref{62} (Iterative Lemma), we get that when $s=(M-4)/2$, for the given Hamiltonian function $\mathcal{H}_1$  there exists a change of variables $\Phi=X_{F_1}^1 \circ X_{F_2}^1 \circ \cdots \circ X_{F_{(M-4)/2}}^1$ such that
	\begin{align*}
		\widetilde{H}\left(\widetilde{q},\widetilde{\bar{q}}\right) &=\mathcal{H}_1\circ \Phi
		=D+\mathcal{J}+O(|\widetilde{I}|)
		+ O\left(\epsilon^{0.24M}\right)
		,
	\end{align*}
    where $O(|\widetilde I|)$ means the terms depending on action variables $\widetilde I=(\widetilde{I}_{\bm j})_{\bm j\in\mathbb{Z}^d}$ with $\widetilde I_{\bm j}=\left|\widetilde{q}_{\bm j}\right|^2$ only.
    In view of (\ref{Fsq}), we can conclude that $\Phi$ is close to identity map.
    Furthermore, from (\ref{Jpsis}) we get that
    \begin{align*}
    	\left|\widetilde{J}_{\bm j} \coloneqq J_{\bm j} \circ \Phi \right|
    	\le
    	2 (1+|\bm j|_1)^{-3\sigma} .
    \end{align*}
    Let 
    \begin{equation}\label{M}
    	M= \frac{\ln \epsilon^{-1}}{100\sigma\ln \ln \epsilon^{-1}} .
    \end{equation}
    Then
    \begin{equation*}
    	O\left(\epsilon^{0.24M}\right)
    	\sim
    	O\left(
    	\epsilon
    	\exp\left\{\frac{-\left|\ln \epsilon\right|^2}{10^4 \sigma \ln\ln\epsilon^{-1}}\right\}\right).
    \end{equation*}
	
	 Firstly, we prove the long time stability in the new coordinate system, i.e. the stability time
     \begin{align}\label{t}
     	T^*\ge \exp\left\{\frac{\left|\ln \epsilon\right|^2}{10^4 \sigma \ln\ln\epsilon^{-1}}\right\},
     \end{align}
 where
\begin{align}\label{T*}
	T^*=\inf \left\{|t| \, \Bigg| \, 	 
	\left| \widetilde{J}_{\bm j} (t) \right| 
	= 
	4 (1+|\bm j|_1)^{-3\sigma}
	,\
	\exists \, \bm j \in \mathbb{Z}^d
	\right\}.
\end{align}

    Let 
    \begin{equation*}
    	\left| \widetilde{q}_{\bm j}(0) \right|^2 = \zeta_{\bm j}, 
    \end{equation*}
which implies that $ \left\| \widetilde{q}_{\bm j}(0) \right\|_{\sigma}^2 \le 1 $.
    Then we state that 
    \begin{equation}\label{qt}
    	\left\|  \widetilde{q}(t) \right\|_{\sigma}^2 \le 4, \quad \text{when} \quad |t| \le T^*.
    \end{equation}
    Otherwise, there exists $t^*$ satisfying $|t^*| \le T^*$ such that $\left\|  \widetilde{q}(t^*) \right\|_{\sigma}^2 > 4$ (i.e. there exists $\bm j_0 \in \mathbb{Z}^d$ such that $|\widetilde{q}_{\bm j_0}(t^*)|^2 > 4 (1+|\bm j_0|_1)^{-2\sigma}$). Note that
    \begin{equation*}
    	\epsilon \widetilde{J}_{\bm j} = \left|\widetilde{q}_{\bm j}\right|^2 - \zeta_{\bm j},
    \end{equation*}
    thus we have that
    \begin{align*}
    		\left| \widetilde{J}_{\bm j_0} (t^*) \right|
    		&\ge \epsilon^{-1} \left( |\widetilde{q}_{\bm j_0}(t^*)|^2 - \zeta_{\bm j_0} \right)
    		\ge 3 \epsilon^{-1} (1+|\bm j_0|_1)^{-2\sigma},
    \end{align*}
    which is in contradiction to (\ref{T*}).
    
    In view of (\ref{T*}) and (\ref{qt}), we conclude that
    for any $\bm j \in \mathbb{Z}^d$ and $|t| \le T^*$,
\begin{equation*}
	\left|  \frac{d}{dt}
	|\widetilde{q}_{\bm j}(t)|^2 \right|
	=\left|  \left\{ \widetilde I_{\bm j} , \widetilde{H} \right\} \right|
	<
	\epsilon
    \exp\left\{ \frac{-|\ln \epsilon|^2}{10^4 \sigma  \ln \ln \epsilon^{-1}} \right\}
	 (1+|\bm j|_1)^{-6\sigma}.
\end{equation*}
     If (\ref{t}) does not hold, then by using Newton-Leibiniz formula, one has
    \begin{align*}
    	 \left| 
    	 \left| \widetilde q_{\bm j}(T^*) \right|^2 
    	 -  \zeta_{\bm j} 
    	 \right|
    	 &\le 
    	T^*
    	\epsilon
    	 \exp\left\{ \frac{-|\ln \epsilon|^2}{10^4 \sigma \ln \ln \epsilon^{-1}} \right\} (1+|\bm j|_1)^{-6\sigma}
    	 \\&<
    	 \epsilon
    	 (1+|\bm j|_1)^{-6\sigma}
    	 ,
    \end{align*}
    which implies
    \begin{align}\label{012935}
    	\left| \widetilde J_{\bm j}(T^*) \right|
    	 = 
    	 \epsilon^{-1}
    	\left| 
    	\left| \widetilde q_{\bm j}(T^*) \right|^2 -  \zeta_{\bm j} 
    	\right|
    	<
    	(1+|\bm j|_1)^{-6\sigma}
    	.
    \end{align}
    On the other hand, in view of (\ref{T*}), we get that there exists $\bm j'_0$ such that 
    \begin{equation*}
    	\left| \widetilde J_{\bm j'_0}(T^*) \right|=4(1+|\bm j'_0|_1)^{-3\sigma},
    \end{equation*}
    which is in contradiction to (\ref{012935}). 
    
    Finally, note that $\Phi$ is close to identity map, we can finish the proof by coming back to the ordinary coordinate system including the stretching transformation (\ref{ss}). 
\end{proof}

\subsection{Proof of Theorem \ref{th2}}
\begin{proof}
	Firstly, we introduce a new phase space, which is defined by 
	\begin{equation}\label{well}
		\widetilde{\ell}_{\infty}^{\sigma}=\left\{q=(q_{\bm j})_{\bm j\in\mathbb{Z}^d}:
		\interleave  q  \interleave_{\sigma}
		:=
		\sup_{\bm j \in \mathbb{Z}^d}  |q_{\bm j}|(1+\langle\bm j\rangle)^\sigma
		<\infty\right\},
	\end{equation}
    with $\langle\bm j\rangle = \max\{ |\bm j|_1 , 1 \}$.
    Then condition (\ref{cd2}) implies that
   \begin{equation*}
   	\interleave  q(0)  \interleave_{\sigma} \le \epsilon.
   \end{equation*}
    
    Secondly, by using (\ref{122301}) and (\ref{010901}) we have that for any $\bm j \in \mathbb{Z}^d$
    \begin{align}\label{IH}
    	\left|  \left\{ I_{\bm j} , H \right\} \right|
    	&= 
    	\left|  \left\{ I_{\bm j} , R \right\} \right|
    	\le 
    	\sum_{\bm\beta,\bm\gamma\in\mathbb{N}^{\mathbb{Z}^d},|\bm \beta+\bm \gamma|=6\atop
    		\Delta( \bm \beta+\bm\gamma)\leq 1}
    	\left| R^{\beta\gamma}  \right|
    	\left|   \gamma_{\bm j} - \beta_{\bm j}  \right|
    	\left|  q^{\beta}\bar q^{\gamma} \right| .
    \end{align}
    It suffices to consider the term 
    \begin{equation*}
    	\left\{ I_{\bm j} , q^{\bm\beta} \bar{q}^{\bm\gamma} \right\} \neq 0,
    \end{equation*}  
    which implies that 
    \begin{equation}\label{013043}
    	\bm j \in \operatorname{supp} \left( \bm\beta + \bm\gamma \right).
    \end{equation} 
    Then if we assume that $\interleave q(t) \interleave_\sigma \le 2\epsilon$, by using (\ref{013043}) and the facts that 
    \begin{equation*}
    	|\bm \beta+\bm \gamma|=6
    	\quad \text{and} \quad
    	\Delta( \bm \beta+\bm\gamma)\leq 1,
    \end{equation*}
    one has 
    \begin{equation}\label{013049}
    	\left| q^{\bm\beta} \bar{q}^{\bm\gamma} \right| \le 
    	\left(2\epsilon \right)^6 
    	\left( 1+ \langle \bm j \rangle \right)^{-\sigma}
    	\left( \max\left\{ \langle \bm j \rangle,2 \right\} \right)^{-5\sigma}.
    \end{equation}
    Thus, in view of (\ref{epd}), (\ref{IH}) and (\ref{013049}), noting that $|R^{\bm \beta \bm \gamma}|\le 1$ (cf. (\ref{010901})) we get	
    \begin{align}
    	\nonumber
    	\left|  \left\{ I_{\bm j} , H \right\} \right|
    	&\le
    	4^{6d} \cdot 6
    	(2\epsilon)^6 (1+\langle\bm j\rangle)^{-3\sigma} 
    	\left( 
    	\frac{1+\langle\bm j\rangle}
    	{\max\left\{ \langle \bm j \rangle,2 \right\}}
    	\right)^{2\sigma}
    	2^{-3\sigma}
    	\\ & \le \epsilon^{5} (1+\langle \bm j \rangle )^{-3\sigma}
    	2^{-\sigma}
    	,
    	\label{IH2}
    \end{align}
    where the last inequality is based on the fact 
    \begin{equation*}
    	\frac{1+\langle\bm j\rangle}{\max\left\{ \langle \bm j \rangle,2 \right\}} \le \frac{3}{2}.
    \end{equation*}
    
    Finally, following the proof of (\ref{t}), we can finish the proof of (\ref{1.2}).
    Furthermore, note that $\sigma$ can be chosen free from $\epsilon$, we can finish the proof of (\ref{th2.2}) by taking $\sigma\geq \epsilon^{-1}$.
\end{proof}

\section*{Acknowledgments}
   H.C.  was supported by NNSF of China  (No. 11671066, 11401041) and NSFSP (No. ZR2019MA062). Y.S.  was supported by NNSF of China  (No.  12271380).


\end{document}